\numberwithin{equation}{section}
\crefname{algorithm}{Algorithm}{Algorithms}
\Crefname{algorithm}{Algorithm}{Algorithms}
\newtheorem{definition}{Definition}[section]
\newtheorem{remark}{Remark}[section]
\newtheorem{theorem}{Theorem}[section]
\newtheorem{lemma}{Lemma}[section]
\newtheorem{example}{Example}[section]
\newenvironment{keywords}{{\noindent\it {\bf Key~words.}}\quad}{}
\newenvironment{proof}{{\noindent\it Proof.}\quad}{\hfill $\square$\par}
\DeclareMathOperator{\diag}{diag}
\DeclareMathOperator{\rank}{rank}
\begin{document}
\thispagestyle{titlepage} 

\begin{center}
    {\LARGE LU Decomposition and Generalized Autoone-Takagi Decomposition of Dual Matrices and their Applications} \\[1em]

    {\large Renjie Xu\footnote{Corresponding author (R. Xu). Center for Intelligent Multidimensional Data Analysis, Hong Kong Science Park, Shatin, Hong Kong, China. Email: \href{mailto:renjie@innocimda.com}{renjie@innocimda.com}. This author is supported by the Hong Kong Innovation and Technology Commission (InnoHK Project CIMDA).}}\\[1em]

    {\large Yimin Wei\footnote{School of Mathematical Sciences and Shanghai Key Laboratory of Contemporary Applied Mathematics, Fudan University, Shanghai, 200433, China. Email: \href{mailto:ymwei@fudan.edu.cn}{ymwei@fudan.edu.cn}. This author is supported by the National Natural Science Foundation of China under grant 12271108 and the Ministry of Science and Technology of China under grant G2023132005L.}}\\[1em]

    {\large Hong Yan\footnote{Department of Electrical Engineering and Center for Intelligent Multidimensional Data Analysis, City University of Hong Kong, Hong Kong, China. Email: \href{mailto:h.yan@cityu.edu.hk}{h.yan@cityu.edu.hk}. This author is supported by the Hong Kong Research Grants Council (Project 11204821), the Hong Kong Innovation and Technology Commission (InnoHK Project CIMDA), and the City University of Hong Kong (Projects 9610034 and 9610460).}}\\[2em]

    {\large \today} 
\end{center}

\begin{abstract}
This paper uses matrix transformations to provide the Autoone-Takagi decomposition of dual complex symmetric matrices and extends it to dual quaternion $\eta$-Hermitian matrices. 
The LU decomposition of dual matrices is given using the general solution of the Sylvester equation, and its equivalence to the existence of rank-k decomposition and dual Moore-Penrose generalized inverse (DMPGI) is proved. 
Similar methods are then used to provide the Cholesky decomposition of dual real symmetric positive definite matrices. 
Both of our decompositions are driven by applications in numerical linear algebra.
\end{abstract}

\begin{keywords}
Dual matrices, LU decomposition, Autoone-Takagi decomposition, $\eta$-Herimitian, Cholesky decomposition.
\end{keywords}
\bigskip

\noindent{\bf AMS Subject Classifications.} 15A23, 15B33, 65F55.

\newpage

\section{Introduction}\label{sec: introduction} 



Dual numbers have been extensively utilized in a variety of fields, including robotics, 3D rigid body motion modeling, automatic differentiation, and neuroscience \cite{baydin2018automatic,kavan2006dual,perez2004dual,qi2023eigenvalues,wang2023sar,xu2024cur,xu2024randomized}. 
Many studies have utilized the orthogonal factorization of dual matrices to develop methods for identifying traveling wave patterns in the human brain \cite{xu2024qr,xu2024utv}.
Dual matrices also provide a theoretical foundation for pose transformation in robotics applications \cite{GU1987dual}.
The theoretical framework of numerical linear algebra involving dual numbers has been a subject of interest due to its implications across these diverse disciplines \cite{gutin2022generalizations,liu2024newsvendor,luo2022evaluating,pennestri2009linear,qi2022dual,wang2023crucial,zhu2022quartic}.


Complex symmetric matrices are mathematical tools frequently encountered in physics, for example, Hankel matrices in signal processing are complex symmetric matrices \cite{wang2017complex}. 
The Autoone-Takagi decomposition is an effective decomposition method for complex symmetric matrices and has been used in extensive numerical computation research in the past \cite{che2018adaptive}.
The LU decomposition is a significant numerical algorithm for solving linear systems of equations \cite{toledo1997locality}. 
It decomposes a matrix into a product of a lower triangular matrix and an upper triangular matrix and has been extended to the randomized LU decomposition for handling large-scale matrix problems \cite{shabat2018randomized}. 

The establishment of the Autoone-Takagi decomposition and LU decomposition of dual matrices is crucial for future applications.
This paper uses matrix transformation to obtain the Autoone-Takagi decomposition of dual complex symmetric matrices. 
By extending the properties of complex symmetry to the $\eta$-Hermitian properties of quaternions, as proposed by Horn and Zhang \cite{horn2012generalization}, we derive the generalized Autoone-Takagi decomposition of dual quaternion $\eta$-Hermitian matrices.
For the LU decomposition of dual matrices (DLU), the explicit expression of DLU decomposition can be obtained by solving the general solution of the Sylvester equation. 
Moreover, it can be proven that the existence of DLU decomposition is equivalent to dual rank-$k$ decomposition and dual Moore-Penrose generalized inverse (DMPGI).
With similar ideas, we can also obtain the Cholesky decomposition of dual real symmetric positive definite matrices.


The remaining content of this paper is organized as follows, Section \ref{sec: Preliminaries} introduces the basic knowledge of dual numbers and quaternions.
Section \ref{sec: Autonne-Takagi factorization of Dual Complex Matrices} presents the Autoone-Takagi decomposition of dual complex symmetric matrices and extends it to the generalized Autoone-Takagi decomposition of dual quaternion $\eta$-Hermitian matrices.
Autoone-Takagi type dual SVD (ATDSVD) is given for dual real symmetric matrices. 
In Section \ref{sec: LU Decomposition of Dual Matrix}, the LU decomposition of dual matrices is obtained by solving the Sylvester equation. 
The conditions for the existence of dual LU decomposition and the equivalence of dual rank-$k$ decomposition and dual Moore-Penrose generalized inverse are then proved.
Using the same approach, the Cholesky decomposition of dual real symmetric positive definite matrices (DCholesky) is presented.
Section \ref{sec: experiment} constructs a real symmetric positive definite dual matrix and computes the results of ATDSVD and DCholesky algorithms. 
It also provides an accurate error estimation for a noisy Hankel matrix as dual matrix.

\section{Preliminaries}\label{sec: Preliminaries}
This section introduces the basic concepts of dual numbers and quaternions used in the text, with notation consistent with MATLAB.
\subsection{Dual Numbers}\label{sec: Dual Numbers}
Consider a polynomial ring $\mathbb{F}$ over the real number, such as $\mathbb{R}, \mathbb{C}, \mathbb{H}$.
A dual number $d$ defined over $\mathbb{F}$ is denoted as $d =d_{s} + \epsilon d_{i}$, where $\epsilon$ represents the infinitesimal unit, defined by {\color{red}$\epsilon^{2} = 0$,} and $d_{s},d_{i}\in \mathbb{F}$ are the standard and infinitesimal parts of $d$, respectively. 
{\color{red}
Let $\mathbb{DF}$ be the set of dual numbers over $\mathbb{F}$.}
The operations of addition and multiplication for dual numbers $a$ and $b$, represented as $a=a_{s}+\epsilon a_{i}$ and $b=b_{s}+\epsilon b_{i}$, respectively, can be defined as follows,
\begin{equation}
    \begin{cases}
        a+b & = (a_{s}+b_{s})+\epsilon (a_{i}+b_{i}), \\
        ab & = (a_{s}b_{s}) + \epsilon (a_{i}b_{s}+a_{s}b_{i}).
    \end{cases}
\end{equation}
The dual number $a +b \epsilon$ can be represented by the square matrix $\begin{bmatrix}
    a&b\\
    0&a
\end{bmatrix}$. 
In this representation the matrix $\begin{bmatrix}
    0&1\\
    0&0
\end{bmatrix}$ squares to the zero matrix, corresponding to the dual number. 
{\color{red}
Let $\mathbb{DF}^{m \times n}$ be the set of $m \times n$ dual matrices over $\mathbb{F}$.
Dual matrices $A\in \mathbb{DF}^{m \times n}$ can be defined as $A_{s}+\epsilon A_{i}$, where $A_{s},A_{i}\in \mathbb{F}^{m \times n}$, and $A^{\top} = A_{s}^{\top}+\epsilon A_{i}^{\top}$. 
}
If a dual square matrix $A = A_{s}+\epsilon A_{i}\in \mathbb{DF}^{m \times n}$ is unitary, then it satisfies $A_{s}A_{s}^{\top} = I_{m}$ and $A_{s}A_{i}^{\top}+A_{i}A_{s}^{\top}=O_{m \times m}$.
\begin{definition}\label{def: SPD}
    A dual matrix $A=A_{s}+\epsilon A_{i}$ is symmetric positive definite iff $A_{s}$  is symmetric positive definite and $A_{i}$ is symmetric.
\end{definition}
\begin{definition}\label{def: DMPGI}
    \cite{wang2021characterizations} For a given dual matrix $A \in \mathbb{DR}^{m \times n}$ if there exists a dual matrix $X \in \mathbb{DR}^{n \times m}$ satisfying 
    \begin{equation}
        AXA=A,\quad XAX=X, \quad AX=(AX)^{\top}, \quad XA=(XA)^{\top}
    \end{equation}
    we call $X$ the dual Moore-Penrose generalized inverse(for short DMPGI) of $A$, and denote it as $A^{\dagger}$.
\end{definition}

\subsection{Quaternions}\label{sec: quaternion}

A quaternion $q \in \mathbb{H}$ can be expressed as $q = q_{0} + q_{1}\textbf{i} + q_{2}\textbf{j} + q_{3}\textbf{k}$, where $q_{0}, q_{1}, q_{2}, q_{3} \in \mathbb{R}$ and $\textbf{i,j,k}$ denote the three imaginary units of quaternions, meeting the following conditions,
\begin{equation}
    \textbf{i}^{2} =\textbf{j}^{2} =\textbf{k}^{2} =\textbf{ijk}=-1 .
\end{equation}
The quaternion $q$'s conjugate, denoted by $\bar{q}$, is defined as $\bar{q} = q_{0} - q_{1}\textbf{i} - q_{2}\textbf{j} - q_{3}\textbf{k}$.
Given a quaternion matrix $A \in \mathbb{H}^{m \times n}$, the conjugate operator $\overline{A}$, the transpose operator $A^{\top}$ and the conjugate transpose operator $A^{*}$ are given by $\overline{A} = (\overline{a}_{i,j})$, $A^{\top} = (a_{j,i})$, $A^{*} = \overline{A}^{\top} = \overline{A^{\top}} = (\overline{a}_{j,i})$.

\subsection{Autonne-Takagi Decomposition}\label{sec: Autonne-Takagi decomposition}
{\color{red}
The Autonne-Takagi decomposition \cite{autonne1915matrices,takagi1924algebraic} is a singular value decomposition for complex symmetric matrices \cite{horn2012matrix}. 
}
Specifically, any complex symmetric matrix $A\in \mathbb{C}^{n \times n}$ can be expressed as
\begin{equation}\label{equ: A = VsigmaV takagi}
    A = V\Sigma V^{\top},
\end{equation}
where $V\in \mathbb{C}^{n \times n}$ is a unitary matrix, and $\Sigma\in\mathbb{R}^{n \times n}$ is a nonnegative diagonal matrix.
This decomposition highlights the singular value properties of complex symmetric matrices, analogous to the orthogonal diagonalization of symmetric matrices in the real domain.
The Autonne-Takagi decomposition has wide applications in fields such as quantum information and signal processing \cite{teretenkov2022singular}.

\section{Autonne-Takagi Decomposition of Dual Complex Symmetric Matrices (DTakagi)}\label{sec: Autonne-Takagi factorization of Dual Complex Matrices}

In this section, we present the Autonne-Takagi decomposition of dual complex symmetric matrices (DTakagi).

\begin{theorem}\label{the: dualcomplexsymmetric} 
(DTakagi) For a dual complex symmetric matrix $A = A_{s} + \epsilon A_{i} \in \mathbb{DC}^{n\times n}$, we can perform the decomposition as follows,
\begin{equation}\label{equ: A = VsigmaV}
    A = V\Sigma V^{\top},
\end{equation}
where $\Sigma = \Sigma_{s}+\epsilon \Sigma_{i}\in  \mathbb{DR}^{n\times n}$ is a dual diagonal matrix, and $V = V_{s}+\epsilon V_{i}\in \mathbb{DC}^{n\times n} $ is unitary.
\end{theorem}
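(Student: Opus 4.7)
My plan is to bootstrap from the classical Autonne-Takagi decomposition applied to the standard part, and then solve a Sylvester-like equation for the infinitesimal part, exploiting the non-uniqueness of the classical decomposition to clear any degenerate obstructions. Since $A = A^{\top}$, both $A_s$ and $A_i$ are complex symmetric. Applying the classical Autonne-Takagi theorem to $A_s$ yields $A_s = V_s \Sigma_s V_s^{\top}$ with $V_s$ unitary ($V_s V_s^{*} = I$) and $\Sigma_s$ a nonnegative real diagonal matrix; this fixes the standard parts of the target $V$ and $\Sigma$.

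I would then parameterize $V_i = V_s W$ so that the dual unitarity condition $V_s V_i^{*} + V_i V_s^{*} = 0$ reduces to $W + W^{*} = 0$, i.e.\ $W$ is skew-Hermitian. Substituting $V = V_s + \epsilon V_s W$ and $\Sigma = \Sigma_s + \epsilon \Sigma_i$ into $V \Sigma V^{\top}$, matching the coefficient of $\epsilon$, and conjugating by $V_s$ (using $V_s^{\top}\overline{V_s} = I$, which follows from unitarity) produces the matrix equation
\begin{equation*}
   B \;=\; W \Sigma_s + \Sigma_i + \Sigma_s W^{\top}, \qquad B := V_s^{*} A_i \overline{V_s},
\end{equation*}
in which $B$ is complex symmetric because $A_i$ is. I would then solve this entry-by-entry. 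For a diagonal index $j$ the equation reads $B_{jj} = 2 \sigma_j W_{jj} + \delta_j$ with $W_{jj}$ purely imaginary and $\delta_j := (\Sigma_i)_{jj}$ real, giving $\delta_j = \operatorname{Re}(B_{jj})$ and, when $\sigma_j > 0$, $W_{jj}$ determined by $\operatorname{Im}(B_{jj})$. For an off-diagonal pair $j \neq k$, writing $W_{kj} = u + iv$ and using $W_{jk} = -\overline{W_{kj}}$, the equation collapses to $(\sigma_j - \sigma_k)u + i(\sigma_j + \sigma_k)v = B_{jk}$, which uniquely solves for $u$ when $\sigma_j \neq \sigma_k$ and for $v$ when $\sigma_j + \sigma_k > 0$.

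The main obstacle is the degenerate regime of repeated singular values: within a positive block with $\sigma_j = \sigma_k$ the real part of $B_{jk}$ must vanish, while within the zero block $B_{jk}$ itself must vanish and $B_{jj}$ must be real. I plan to defuse this by exploiting the non-uniqueness of the classical decomposition, refining $V_s$ to $V_s Q$ for an admissible block-diagonal $Q$ that preserves $V_s \Sigma_s V_s^{\top}$ and unitarity. Within each positive block the factor $Q_j$ must be both unitary and complex-orthogonal, hence real orthogonal, so a real orthogonal diagonalization of the real-symmetric matrix $\operatorname{Re}(B)$ restricted to that block clears the obstruction; within the zero block $Q_0$ may be any unitary matrix, and a secondary classical Autonne-Takagi applied to the corresponding block of $B$ renders it real and diagonal, likewise clearing the obstruction. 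After this refinement the entry-by-entry construction goes through unconditionally, yielding the desired $V_i$ and $\Sigma_i$ and hence the dual decomposition $A = V \Sigma V^{\top}$.
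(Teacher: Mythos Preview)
Your proposal is correct and follows the same overall strategy as the paper: apply the classical Autonne--Takagi factorization to $A_s$, then solve a first-order (Sylvester-type) equation for the infinitesimal correction, with a block-wise refinement to handle repeated singular values of $A_s$. The paper packages the off-block correction as a single skew-symmetric $Q$ and then runs a secondary Autonne--Takagi on every diagonal block $B_{ii}$, whereas you parametrize by a skew-Hermitian $W$, solve entrywise, and split the block refinement into a real-orthogonal diagonalization of $\operatorname{Re}(B)$ on the positive-$\sigma$ blocks together with an Autonne--Takagi on the zero block---a minor tactical difference that yields a possibly different (not necessarily nonnegative) $\Sigma_i$ but the same theorem.
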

\begin{proof}
    Suppose $A= A_{s}+\epsilon A_{i}$ is a dual complex symmetric matrix, where $A_{s},A_{i} \in \mathbb{C}^{n \times n}$ are complex symmetric matrices. 
    According to the Autonne-Takagi factorization \cite{autonne1915matrices,takagi1924algebraic}, we can find a complex unitary matrix $W\in \mathbb{C}^{n \times n}$ such that $W^{\top}A_{s}W=D =  \diag(\lambda_{1}I_{n_{1}},\lambda_{2}I_{n_{2}},\dots,\lambda_{r}I_{n_{r}})\in \mathbb{R}^{n \times n}$ is diagonal where $\lambda_{1},\lambda_{2},\ldots,\lambda_{r}$ are all distinct and $n_{1}+n_{2}\cdots+n_{r}=n$. 
    So we can rewrite $W^{\top}AW = W^{\top}A_{s}W+\epsilon W^{\top}A_{i}W = D+\epsilon B\in \mathbb{DC}^{n\times n}$ as a block matrix
    \begin{equation}
        \begin{bmatrix}
            \lambda_{1}I_{n_{1}}&O &\cdots & O \\
            O & \lambda_{2}I_{n_{2}} &\ddots &\vdots\\
            \vdots & \ddots &\ddots & O\\
            O &\cdots & O & \lambda_{r}I_{n_{r}}
        \end{bmatrix}+\epsilon
        \begin{bmatrix}
            B_{11}& B_{12} &\cdots &  B_{1r} \\
             B_{12}^{\top} & B_{22} &\ddots &\vdots\\
            \vdots & \ddots &\ddots & B_{r-1,r}\\
           B_{1r}^{\top} &\cdots &  B_{r-1,r}^{\top} &  B_{rr}
        \end{bmatrix},
    \end{equation}
    where $B=W^{\top}A_{i}W$, each $B_{ii}(i =1,2,\ldots,r)$ is complex symmetric. 
    Next, we construct unitary matrices to iteratively diagonalize the infinitesimal part.
    We choose the unitary matrix
    \begin{equation}
        I+\epsilon Q =\begin{bmatrix}
            I_{n_{1}}&O &\cdots & O \\
            O & I_{n_{2}} &\ddots &\vdots\\
            \vdots & \ddots &\ddots & O\\
            O &\cdots & O & I_{n_{r}}
        \end{bmatrix}+\epsilon \begin{bmatrix}
            O&\frac{B_{12}}{\lambda_{1}-\lambda_{2}} &\cdots & \frac{ B_{1r}}{\lambda_{1}-\lambda_{r}} \\
            -\frac{ B_{12}^{\top}}{\lambda_{1}-\lambda_{2}} & O &\ddots &\vdots\\
            \vdots & \ddots &\ddots & \frac{ B_{r-1,r}}{\lambda_{r-1}-\lambda_{r}}\\
            -\frac{B_{1r}^{\top}}{\lambda_{1}-\lambda_{r}} &\cdots & -\frac{ B_{r-1,r}^{\top}}{\lambda_{r-1}-\lambda_{r}} & O
        \end{bmatrix}\in\mathbb{DC}^{n\times n}
    \end{equation}
    to block diagonalize the infinitesimal part of matrix $B$, while ensuring that the standard part $D$ remains unchanged.
    {\color{red}
    Through
    \begin{equation}\label{equ: (I+Q)(D+B)(I+QT)}
        \begin{aligned}
            &(I+\epsilon Q)W^{\top}AW(I+\epsilon Q)^{\top} = (I+\epsilon Q)(W^{\top}A_{s}W+\epsilon W^{\top}A_{i}W)(I+\epsilon Q)^{\top} \\
            &= (I+\epsilon Q)(D+\epsilon B)(I+\epsilon Q)^{\top} = D+\epsilon(QD+B+DQ^{\top})\\
             &=\begin{bmatrix}
            \lambda_{1}I_{n_{1}}&O &\cdots & O \\
            O & \lambda_{2}I_{n_{2}} &\ddots &\vdots\\
            \vdots & \ddots &\ddots & O\\
            O &\cdots & O & \lambda_{r}I_{n_{r}}
        \end{bmatrix}+ \epsilon \left(\begin{bmatrix}
            O&\frac{\lambda_{2}B_{12}}{\lambda_{1}-\lambda_{2}} &\cdots & \frac{ \lambda_{r}B_{1r}}{\lambda_{1}-\lambda_{r}} \\
            -\frac{ \lambda_{1}B_{12}^{\top}}{\lambda_{1}-\lambda_{2}} & O &\ddots &\vdots\\
            \vdots & \ddots &\ddots & \frac{ \lambda_{r}B_{r-1,r}}{\lambda_{r-1}-\lambda_{r}}\\
            -\frac{\lambda_{1}B_{1r}^{\top}}{\lambda_{1}-\lambda_{r}} &\cdots & -\frac{ \lambda_{r-1}B_{r-1,r}^{\top}}{\lambda_{r-1}-\lambda_{r}} & O
            \end{bmatrix}\right.\\
           &\left.+\begin{bmatrix}
            B_{11}& B_{12} &\cdots &  B_{1r} \\
             B_{12}^{\top} & B_{22} &\ddots &\vdots\\
            \vdots & \ddots &\ddots & B_{r-1,r}\\
           B_{1r}^{\top} &\cdots &  B_{r-1,r}^{\top} &  B_{rr}
        \end{bmatrix}+\begin{bmatrix}
            O&-\frac{\lambda_{1}B_{12}}{\lambda_{1}-\lambda_{2}} &\cdots & -\frac{\lambda_{1}B_{1r}}{\lambda_{1}-\lambda_{r}} \\
            \frac{\lambda_{2}B_{12}^{\top}}{\lambda_{1}-\lambda_{2}} & O &\ddots &\vdots\\
            \vdots & \ddots &\ddots & -\frac{\lambda_{r-1}B_{r-1,r}}{\lambda_{r-1}-\lambda_{r}}\\
            \frac{\lambda_{r}B_{1r}^{\top}}{\lambda_{1}-\lambda_{r}} &\cdots & \frac{\lambda_{r}B_{r-1,r}^{\top}}{\lambda_{r-1}-\lambda_{r}} & O
        \end{bmatrix}\right)\\
        &=\begin{bmatrix}
            \lambda_{1}I_{n_{1}}&O &\cdots & O \\
            O & \lambda_{2}I_{n_{2}} &\ddots &\vdots\\
            \vdots & \ddots &\ddots & O\\
            O &\cdots & O & \lambda_{r}I_{n_{r}}
        \end{bmatrix}+ \epsilon \begin{bmatrix}
            B_{11}& O &\cdots &  O \\
             O & B_{22} &\ddots &\vdots\\
            \vdots & \ddots &\ddots & O\\
           O &\cdots &  O &  B_{rr}
        \end{bmatrix}
        \end{aligned},
    \end{equation}
   we can understand the entire diagonalization process.}
    Considering that the diagonal blocks $B_{ii}$ are all complex symmetric matrices, there exists the Autonne-Takagi decomposition $B_{ii} = U_{i}\Theta_{i}U_{i}^{\top}$, where $U_{i}$ is a unitary matrix, and $\Theta_{i}$ is a real nonnegative diagonal matrix. 
    Thus, $U= \diag(U_{1},U_{2},\ldots,U_{r})\in\mathbb{C}^{n \times n}$ can diagonalize $B_{ii}$.
    By organizing the above process, we obtain the following $\Sigma = \Sigma_{s}+\epsilon \Sigma_{i}\in \mathbb{DR}^{n\times n}$, where $\Sigma_{s} = \diag(\lambda_{1}I_{n_{1}},\lambda_{2}I_{n_{2}},\dots,\lambda_{r}I_{n_{r}})\in\mathbb{R}^{n\times n}$ and $\Sigma_{i} = \diag(\Theta_{1},\Theta_{2},\ldots,\Theta_{r})\in\mathbb{R}^{n\times n}$.
    {\color{red}
    By combining \eqref{equ: (I+Q)(D+B)(I+QT)} with the above process, we can obtain
    \begin{equation}\label{equ: UsigmaU }
        \begin{aligned}
        (I+\epsilon Q)W^{\top}AW(I+\epsilon Q)^{\top} &= U\Sigma U^{\top}, \\
        U^{\top}(I+\epsilon Q)W^{\top}AW(I+\epsilon Q)^{\top}U & = \Sigma ,\\
        (U^{\top}W^{\top}+\epsilon U^{\top}QW^{\top})A(WU+\epsilon WQ^{\top}U)& =\Sigma ,\\
        (U^{\top}W^{\top}-\epsilon U^{\top}Q^{\top}W^{\top})A(WU-\epsilon WQU)& =\Sigma ,
        \end{aligned}
    \end{equation}
    where $Q$ is skew-symmetric which means $Q+Q^{\top} = O$.}
    Let $V = V_{s}+\epsilon V_{i} \in \mathbb{DC}^{n\times n}$, where $V_{s}=WU\in \mathbb{C}^{n\times n}$ and $V_{i}=-WQU\in \mathbb{C}^{n\times n}$.
    Since that
    \begin{equation}\label{equ: VVtop =I}
        \begin{aligned}
        VV^{\top} & = (V_{s}+\epsilon V_{i}) (V_{s}^{\top}+\epsilon V_{i}^{\top}) \\
        &= (WU-\epsilon WQU)(U^{\top}W^{\top}-\epsilon U^{\top}Q^{\top}W^{\top}) \\
        &= WW^{\top}-\epsilon (WQW^{\top}+WQ^{\top}W^{\top})\\
        &=I_{n},
        \end{aligned}
    \end{equation}
    where $Q$ is skew-symmetric which means $Q+Q^{\top} = O$,  we achieve the desired form.
\end{proof}

We provide pseudocode for DTakagi based on the proof from Theorem \ref{the: dualcomplexsymmetric}, and present it in Algorithm \ref{alg: autonne-takagi}.

\begin{algorithm}[htb]
\DontPrintSemicolon
    \KwInput{Dual complex symmetric matrix $A = A_{s}+A_{i}\epsilon \in \mathbb{DC}^{n \times n}$.}
    
    Compute the Autonne-Takagi factorization of $A_{s} = WDW^{\top}$ .

    Compute $B= W^{\top}A_{i}W =\begin{bmatrix}
            B_{11}& B_{12} &\cdots &  B_{1r} \\
             B_{12}^{\top} & B_{22} &\ddots &\vdots\\
            \vdots & \ddots &\ddots & B_{r-1,r}\\
           B_{1r}^{\top} &\cdots &  B_{r-1,r}^{\top} &  B_{rr}
        \end{bmatrix}$.

    Compute $ Q = \begin{bmatrix}
            O&\frac{B_{12}}{\lambda_{1}-\lambda_{2}} &\cdots & \frac{ B_{1r}}{\lambda_{1}-\lambda_{r}} \\
            -\frac{ B_{12}^{\top}}{\lambda_{1}-\lambda_{2}} & O &\ddots &\vdots\\
            \vdots & \ddots &\ddots & \frac{ B_{r-1,r}}{\lambda_{r-1}-\lambda_{r}}\\
            -\frac{B_{1r}^{\top}}{\lambda_{1}-\lambda_{r}} &\cdots & -\frac{ B_{r-1,r}^{\top}}{\lambda_{r-1}-\lambda_{r}} & O
        \end{bmatrix}$.

    Compute the Autonne-Takagi factorization of $B_{ii} = U_{i}\Theta_{i}U_{i}^{\top}, i =1,2,\ldots,r$. 
    
    Set $\Sigma_{s} = D$ and $\Sigma_{i} = \diag(\Theta_{1},\Theta_{2},\ldots,\Theta_{r})$.

    Set $V_{s}=WU$ and $V_{i} = -WQU$, where $U= \diag(U_{1},U_{2},\ldots,U_{r})$.

    \KwOutput{$V = V_{s}+V_{i}\epsilon \in \mathbb{DC}^{n \times n}$ is unitary, and $\Sigma = \Sigma_{s}+\Sigma_{i}\epsilon \in \mathbb{DR}^{n \times n}$ is a dual diagonal matrix.}
    \caption{Autonne-Takagi Decomposition of the Dual Complex Symmetric Matrix (DTakagi)}
    \label{alg: autonne-takagi}
\end{algorithm}

\subsection{Dual Quaternion \texorpdfstring{$\eta$}.-Hermitian Matrix}\label{sec: Dual Quaternion eta-Hermitian Matrix}
This section extends the concept of complex symmetry to $\eta$-Hermitian in quaternions and generalizes the Autonne-Takagi decomposition of dual complex symmetric matrices to the decomposition of dual quaternion $\eta$-Hermitian matrix.
Consider the quaternion field $\mathbb{H}$, it has three imaginary parts {\color{red}$\textbf{i},\textbf{j}$, and $\textbf{k}$. }
There is an important identity that is 
\begin{equation}\label{equ: -jzj=-kzk=bar(z)}
    -\textbf{j}z\textbf{j} = -\textbf{k}z\textbf{k} = \bar{z},\text{ for each complex number } z=a+b\textbf{i},
\end{equation}
where $a,b \in \mathbb{R}$.
The identity demonstrates that by using the unit imaginary quaternions $\textbf{j}$ and $\textbf{k}$, each complex number can be made similar to its conjugate over the quaternions.
The literature \cite{horn2012generalization} used this characteristic to define the $\eta$-Hermitian property of the quaternion matrix.
Let $\eta$ be a given unit pure imaginary quaternion, then $\eta^{H} = \bar{\eta}  = -\eta, \eta^{2} = -1$. 
\begin{definition}\label{def: eta-Hermitian}
    ($\eta$-Hermitian) For a quaternion matrix $A\in\mathbb{H}^{n\times n}$, we define
\begin{equation}\label{equ: Aeta=etaAeta}
    A^{\eta} = \eta^{H}A\eta, A^{H} = \bar{A}^{\top}\textit{(conjugate transpose), and } A^{\eta H} = \eta^{H} A^{H} \eta. 
\end{equation}
So a quaternion $n\times n$ matrix $A$ is said to be $\eta$-Hermitian if $A = A^{\eta H}$.
\end{definition}
If $A\in \mathbb{H}^{n\times n}$ has all complex entries, simple calculations can obtain that
{\color{red}
\begin{equation}\label{equ: AiH=AH}
    A^{\textbf{i}H} = A^{H}, \textit{ and } A^{\textbf{j}H} = A^{\textbf{k}H} = (\bar{A})^{H} = A^{\top}.
\end{equation}
}
Let $B\in \mathbb{R}^{n \times n}$. 
Then $B$ is a symmetric matrix if and only if it is $\eta$-Hermitian for any unit pure imaginary $\eta$.
Let $C\in \mathbb{C}^{n \times n}$.
{\color{red}
Then $C$ is a Hermitian matrix if and only if it is $\textbf{i}$-Hermitian, while $C$ is a symmetric matrix if and only if it is both $\textbf{j}$-Hermitian and $\textbf{k}$-Hermitian \cite{horn2012generalization}. 
}
Now we introduce the $\eta$-Hermitian property of quaternions, which extends the complex symmetric property.
 The following theorem can achieve the generalization of the complex Autonne–Takagi factorization to quaternion matrices.
\begin{theorem}[\cite{horn2012generalization}, Theorem 3]\label{the: eta_Hermitian}
Let $\eta$ be a unit pure imaginary quaternion and suppose that $A\in \mathbb{H}^{n\times n}$ is $\eta$-Hermitian. 
Then there is a unitary quaternion matrix $U$ and a real nonnegative diagonal matrix $\Sigma$ such that
\begin{equation}\label{equ:A=UsigmaUh}
    A = U\Sigma U^{\eta H}
\end{equation}
The diagonal entries of $\Sigma$ are the singular values of $A$.
\end{theorem}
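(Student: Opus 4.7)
The plan is to reduce this statement to the spectral theory of skew-Hermitian quaternion matrices, which is classical and better understood than $\eta$-Hermitian normal forms. First I would establish the key algebraic identity: if $A$ is $\eta$-Hermitian, i.e., $A = \eta^{H}A^{H}\eta$, then the matrix $M := A\eta$ is skew-Hermitian. Indeed, using $\eta^{H}=-\eta$ and $\eta^{2}=-1$, the defining equation rearranges to $A^{H} = -\eta A \eta$, and then
\[
M^{H} = \eta^{H}A^{H} = -\eta(-\eta A\eta) = -A\eta = -M .
\]

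Second I would invoke the spectral theorem for skew-Hermitian quaternion matrices: there exist a unitary $U\in \mathbb{H}^{n\times n}$ and a diagonal matrix $\Lambda$ whose entries are pure imaginary quaternions such that $M = U\Lambda U^{H}$. The crucial feature of quaternion spectral theory exploited here is that right-eigenvalues are defined only up to a similarity class $\{p\lambda \bar{p}:|p|=1\}$, which for a pure imaginary $\lambda$ is the entire sphere of pure imaginaries with the same modulus. Concretely, replacing $U$ by $UD$ for a diagonal unitary $D=\diag(d_{1},\ldots,d_{n})$ of unit quaternions replaces $\Lambda_{jj}$ by $\bar{d}_{j}\Lambda_{jj}d_{j}$. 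I can therefore pick each $d_{j}$ to rotate $\Lambda_{jj}$ into the real line $\mathbb{R}\eta$, and then, on those indices where the sign comes out wrong, further conjugate by a unit pure imaginary anticommuting with $\eta$ (which sends $\eta\mapsto -\eta$). This yields $\Lambda = \Sigma\eta$ for a real nonnegative diagonal $\Sigma$, so $M = U\Sigma\eta U^{H}$.

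Finally, I would undo the substitution. Using $\eta^{-1}=-\eta$ and the definition $U^{\eta H}=\eta^{H}U^{H}\eta = -\eta U^{H}\eta$,
\[
A = M\eta^{-1} = -U\Sigma\eta U^{H}\eta = U\Sigma(-\eta U^{H}\eta) = U\Sigma U^{\eta H} ,
\]
which is the desired form. That the diagonal entries of $\Sigma$ are the singular values of $A$ follows from the short observation that $U^{\eta H}$ is itself unitary (since $\eta\eta^{H}=1$), whence $AA^{H} = U\Sigma^{2}U^{H}$.

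The main obstacle is the normalization step in the second paragraph: one must verify that the two successive diagonal unitary adjustments — one to move each $\Lambda_{jj}$ onto $\mathbb{R}\eta$, and one to correct signs — can be carried out simultaneously across all indices without destroying the decomposition $M = U\Lambda U^{H}$. This is where the geometry of the pure imaginary sphere in $\mathbb{H}$, and the fact that its stabilizer under conjugation is a circle, does the real work. Once this normalization is in hand, the first and third paragraphs are routine algebraic manipulations.
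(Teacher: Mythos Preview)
The paper does not prove this statement; it is quoted verbatim from Horn and Zhang \cite{horn2012generalization} and used as a black box in the proof of Theorem~\ref{the: dualquaternionsymmetric}. So there is no argument in the paper to compare against.

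That said, your proof is correct. The reduction $M:=A\eta$ from $\eta$-Hermitian to skew-Hermitian is the right move, and the normalization step you flag as the ``main obstacle'' is in fact harmless: since $D$ is diagonal, the conjugation $\Lambda\mapsto D^{H}\Lambda D$ acts entrywise as $\Lambda_{jj}\mapsto \bar d_{j}\Lambda_{jj}d_{j}$, so each index is treated independently and nothing can interfere across indices. Moreover, because the conjugation action of the unit quaternions on the pure imaginaries is transitive on each sphere of fixed modulus, you can send $\Lambda_{jj}$ directly to $+\sigma_{j}\eta$ in a single choice of $d_{j}$; the separate sign-correction pass is unnecessary, though it does no harm. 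The final unwinding $A=-U\Sigma\eta U^{H}\eta=U\Sigma U^{\eta H}$ and the check that $U^{\eta H}$ is unitary (hence $AA^{H}=U\Sigma^{2}U^{H}$) are both correct.
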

Now consider the case of dual quaternion matrices. 
We generalize the complex Autonne-Takagi decomposition to dual quaternion $\eta$-Hermitian matrices. 
\begin{theorem}\label{the: dualquaternionsymmetric}
For a dual quaternion matrix $A=A_{s}+\epsilon A_{i}\in \mathbb{DH}^{n \times n}$ which is $\eta$-Hermitian, we can perform the decomposition as follows,
\begin{equation}\label{equ: A=VsigmaVh}
    A = V\Sigma V^{\eta H}
\end{equation}
where $\Sigma = \Sigma_{s}+\epsilon \Sigma_{i}\in  \mathbb{DR}^{n\times n}$ is a dual diagonal matrix, and $V=V_{s}+\epsilon V_{i} \in\mathbb{DH}^{n\times n}$ is unitary.
\end{theorem}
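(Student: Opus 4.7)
The plan is to imitate the proof of Theorem \ref{the: dualcomplexsymmetric}, systematically replacing every transpose by the $\eta$-Hermitian conjugate $(\cdot)^{\eta H}$ and invoking Theorem \ref{the: eta_Hermitian} wherever the complex Autonne--Takagi factorization was used.

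First I would note that $A^{\eta H} = A_{s}^{\eta H} + \epsilon A_{i}^{\eta H}$, so the hypothesis $A = A^{\eta H}$ forces both the standard and infinitesimal parts to be $\eta$-Hermitian. By Theorem \ref{the: eta_Hermitian} applied to $A_{s}$, there is a unitary $W\in\mathbb{H}^{n\times n}$ and a real nonnegative diagonal $D=\diag(\lambda_{1}I_{n_{1}},\dots,\lambda_{r}I_{n_{r}})$ with pairwise distinct $\lambda_{\ell}$ such that $W^{\eta H}A_{s}W=D$. Then $W^{\eta H}AW=D+\epsilon B$ with $B:=W^{\eta H}A_{i}W$ again $\eta$-Hermitian; partitioning $B=(B_{ij})_{i,j=1}^{r}$ conformally with $D$ yields $B_{ji}=B_{ij}^{\eta H}$.

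Next I would introduce the dual matrix $I+\epsilon Q$ whose block-off-diagonal entries are $Q_{ij}=B_{ij}/(\lambda_{i}-\lambda_{j})$ for $i\neq j$, with $Q_{ii}=O$. Because $\lambda_{i}-\lambda_{j}\in\mathbb{R}$ and $B_{ji}=B_{ij}^{\eta H}$, this $Q$ is skew-$\eta$-Hermitian, i.e., $Q+Q^{\eta H}=O$, which immediately gives $(I+\epsilon Q)(I+\epsilon Q)^{\eta H}=I$. Expanding $(I+\epsilon Q)(D+\epsilon B)(I+\epsilon Q)^{\eta H}$ to first order in $\epsilon$ produces $D+\epsilon(QD-DQ+B)$, and the Sylvester-type choice of $Q$ annihilates every off-diagonal block of $B$, leaving $D+\epsilon\,\diag(B_{11},\dots,B_{rr})$. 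Each diagonal block $B_{ii}$ inherits $\eta$-Hermiticity, so a second application of Theorem \ref{the: eta_Hermitian} gives $B_{ii}=U_{i}\Theta_{i}U_{i}^{\eta H}$ with $U_{i}$ unitary and $\Theta_{i}$ real nonnegative diagonal. Setting $U=\diag(U_{1},\dots,U_{r})$, $\Sigma_{s}=D$, $\Sigma_{i}=\diag(\Theta_{1},\dots,\Theta_{r})$, and $V_{s}=WU$, $V_{i}=-WQU$, the dual matrix $V=V_{s}+\epsilon V_{i}$ satisfies $A=V\Sigma V^{\eta H}$; dual unitarity of $V$ reduces to $V_{s}V_{s}^{\eta H}=I$ (from unitarity of $W$ and $U$) together with $V_{s}V_{i}^{\eta H}+V_{i}V_{s}^{\eta H}=O$, which is exactly $Q+Q^{\eta H}=O$.

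The main obstacle I anticipate is the bookkeeping in the block Sylvester step: because quaternion multiplication is noncommutative and $\eta$ need not commute with the entries of $B_{ij}$, the identity $Q_{ij}^{\eta H}=-Q_{ji}$ must be checked carefully. It hinges on $\lambda_{i}-\lambda_{j}\in\mathbb{R}$ passing cleanly through $(\cdot)^{\eta H}$ and on the structural identity $B_{ji}=B_{ij}^{\eta H}$ coming from $\eta$-Hermiticity of $B$. Once this verification is recorded, the rest of the argument parallels the complex symmetric case of Theorem \ref{the: dualcomplexsymmetric} verbatim.
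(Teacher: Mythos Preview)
Your proposal is correct and matches the paper's proof essentially step for step: apply Theorem \ref{the: eta_Hermitian} to $A_{s}$, construct the skew-$\eta$-Hermitian $Q$ with off-diagonal blocks $B_{ij}/(\lambda_{i}-\lambda_{j})$ to block-diagonalize the infinitesimal part, apply Theorem \ref{the: eta_Hermitian} again to each $B_{ii}$, and set $V_{s}=WU$, $V_{i}=-WQU$. The paper carries out exactly these computations and likewise records $Q+Q^{\eta H}=O$ as the key identity behind the unitarity verification.
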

\begin{proof}  
    Suppose $A=A_{s}+\epsilon A_{i}$ is a dual quaternion $\eta$-Hermitian matrix, where $A_{s},A_{i}\in \mathbb{H}^{n \times n}$. 
    According to the Theorem \ref{the: eta_Hermitian}, we can find a quaternion unitary matrix $W \in \mathbb{H}^{n \times n}$ such that $W^{\eta H}A_{s}W=D=  \diag(\lambda_{1}I_{n_{1}},\lambda_{2}I_{n_{2}},\dots,\lambda_{r}I_{n_{r}})\in \mathbb{R}^{n \times n}$ is diagonal where $\lambda_{1},\lambda_{2},\ldots,\lambda_{r}$ are all distinct and $n_{1}+n_{2}\cdots+n_{r}=n$. 
    So we can rewrite $W^{\eta H}AW = W^{\eta H}A_{s}W+\epsilon W^{\eta H}A_{i}W = D+\epsilon B\in \mathbb{DH}^{n\times n}$ as a block matrix
    \begin{equation}
        \begin{bmatrix}
            \lambda_{1}I_{n_{1}}&O &\cdots & O \\
            O & \lambda_{2}I_{n_{2}} &\ddots &\vdots\\
            \vdots & \ddots &\ddots & O\\
            O &\cdots & O & \lambda_{r}I_{n_{r}}
        \end{bmatrix}+\epsilon
        \begin{bmatrix}
            B_{11}& B_{12} &\cdots &  B_{1r} \\
             B_{12}^{\eta H} & B_{22} &\ddots &\vdots\\
            \vdots & \ddots &\ddots & B_{r-1,r}\\
           B_{1r}^{\eta H} &\cdots &  B_{r-1,r}^{\eta H} &  B_{rr}
        \end{bmatrix}
    \end{equation}
    where $B=W^{\eta H}A_{i}W$, each $B_{ii}(i =1,2,\ldots,r)$ is $\eta$-Hermitian. 
    Next, we construct unitary matrices to diagonalize the infinitesimal part iteratively.
    We choose the unitary matrix
    \begin{equation}
        I+\epsilon Q =\begin{bmatrix}
            I_{n_{1}}&O &\cdots & O \\
            O & I_{n_{2}} &\ddots &\vdots\\
            \vdots & \ddots &\ddots & O\\
            O &\cdots & O & I_{n_{r}}
        \end{bmatrix}+\epsilon \begin{bmatrix}
            O&\frac{B_{12}}{\lambda_{1}-\lambda_{2}} &\cdots & \frac{B_{1r}}{\lambda_{1}-\lambda_{r}} \\
            -\frac{ B_{12}^{\eta H}}{\lambda_{1}-\lambda_{2}} & O &\ddots &\vdots\\
            \vdots & \ddots &\ddots & \frac{ B_{r-1,r}}{\lambda_{r-1}-\lambda_{r}}\\
            -\frac{B_{1r}^{\eta H}}{\lambda_{1}-\lambda_{r}} &\cdots & -\frac{ B_{r-1,r}^{\eta H}}{\lambda_{r-1}-\lambda_{r}} & O
        \end{bmatrix}\in\mathbb{DH}^{n\times n}
    \end{equation}
    to block diagonalize the infinitesimal part of matrix $B$, while ensuring that the standard part $D$ remains unchanged.
    {\color{red}
    Through
    \begin{equation}\label{equ: (I+Q)(D+B)(I+Qeta H)}
        \begin{aligned}
            &(I+\epsilon Q)W^{\eta H}AW(I+\epsilon Q)^{\eta H} = (I+\epsilon Q)(W^{\eta H}A_{s}W+\epsilon W^{\eta H}A_{i}W)(I+\epsilon Q)^{\eta H} \\
            &= (I+\epsilon Q)(D+\epsilon B)(I+\epsilon Q)^{\eta H} = D+\epsilon(QD+B+DQ^{\eta H})\\
             &=\begin{bmatrix}
            \lambda_{1}I_{n_{1}}&O &\cdots & O \\
            O & \lambda_{2}I_{n_{2}} &\ddots &\vdots\\
            \vdots & \ddots &\ddots & O\\
            O &\cdots & O & \lambda_{r}I_{n_{r}}
        \end{bmatrix}+ \epsilon \left(\begin{bmatrix}
            O&\frac{\lambda_{2}B_{12}}{\lambda_{1}-\lambda_{2}} &\cdots & \frac{ \lambda_{r}B_{1r}}{\lambda_{1}-\lambda_{r}} \\
            -\frac{ \lambda_{1}B_{12}^{\eta H}}{\lambda_{1}-\lambda_{2}} & O &\ddots &\vdots\\
            \vdots & \ddots &\ddots & \frac{ \lambda_{r}B_{r-1,r}}{\lambda_{r-1}-\lambda_{r}}\\
            -\frac{\lambda_{1}B_{1r}^{\eta H}}{\lambda_{1}-\lambda_{r}} &\cdots & -\frac{ \lambda_{r-1}B_{r-1,r}^{\eta H}}{\lambda_{r-1}-\lambda_{r}} & O
            \end{bmatrix}\right.\\
           &\left.+\begin{bmatrix}
            B_{11}& B_{12} &\cdots &  B_{1r} \\
             B_{12}^{\eta H} & B_{22} &\ddots &\vdots\\
            \vdots & \ddots &\ddots & B_{r-1,r}\\
           B_{1r}^{\eta H} &\cdots &  B_{r-1,r}^{\eta H} &  B_{rr}
        \end{bmatrix}+\begin{bmatrix}
            O&-\frac{\lambda_{1}B_{12}}{\lambda_{1}-\lambda_{2}} &\cdots & -\frac{\lambda_{1}B_{1r}}{\lambda_{1}-\lambda_{r}} \\
            \frac{\lambda_{2}B_{12}^{\eta H}}{\lambda_{1}-\lambda_{2}} & O &\ddots &\vdots\\
            \vdots & \ddots &\ddots & -\frac{\lambda_{r-1}B_{r-1,r}}{\lambda_{r-1}-\lambda_{r}}\\
            \frac{\lambda_{r}B_{1r}^{\eta H}}{\lambda_{1}-\lambda_{r}} &\cdots & \frac{\lambda_{r}B_{r-1,r}^{\eta H}}{\lambda_{r-1}-\lambda_{r}} & O
        \end{bmatrix}\right)\\
        &=\begin{bmatrix}
            \lambda_{1}I_{n_{1}}&O &\cdots & O \\
            O & \lambda_{2}I_{n_{2}} &\ddots &\vdots\\
            \vdots & \ddots &\ddots & O\\
            O &\cdots & O & \lambda_{r}I_{n_{r}}
        \end{bmatrix}+ \epsilon \begin{bmatrix}
            B_{11}& O &\cdots &  O \\
             O & B_{22} &\ddots &\vdots\\
            \vdots & \ddots &\ddots & O\\
           O &\cdots &  O &  B_{rr}
        \end{bmatrix}
        \end{aligned},
    \end{equation}
   we can understand the entire diagonalization process.}
    Considering that the diagonal blocks $B_{ii}$ are all $\eta$-Hermitian matrices, according to Theorem \ref{the: eta_Hermitian} there exists decomposition $B_{ii} = U_{i}\Theta_{i}U_{i}^{\eta H}$, where $U_{i}$ is a quaternion unitary matrix, and $\Theta_{i}$ is a real nonnegative diagonal matrix. 
    Thus, $U= \diag(U_{1},U_{2},\ldots,U_{r})\in\mathbb{H}^{n \times n}$ can diagonalize $B_{ii}$.
    By organizing the above process, we obtain the following $\Sigma = \Sigma_{s}+\epsilon \Sigma_{i}\in \mathbb{DR}^{n\times n}$, where $\Sigma_{s} = \diag(\lambda_{1}I_{n_{1}},\lambda_{2}I_{n_{2}},\dots,\lambda_{r}I_{n_{r}})\in\mathbb{R}^{n\times n}$ and $\Sigma_{i} = \diag(\Theta_{1},\Theta_{2},\ldots,\Theta_{r})\in\mathbb{R}^{n\times n}$.
    {\color{red}
    By combining \eqref{equ: (I+Q)(D+B)(I+Qeta H)} with the above process, we can obtain
    \begin{equation}\label{equ: UsigmaUeta }
        \begin{aligned}
        (I+\epsilon Q)W^{\eta H}AW(I+\epsilon Q)^{\eta H} &= U\Sigma U^{\eta H}, \\
        U^{\eta H}(I+\epsilon Q)W^{\eta H}AW(I+\epsilon Q)^{\eta H}U & = \Sigma ,\\
        (U^{\eta H}W^{\eta H}+\epsilon U^{\eta H}QW^{\eta H})A(WU+\epsilon WQ^{\eta H}U)& =\Sigma ,\\
        (U^{\eta H}W^{\eta H}-\epsilon U^{\eta H}Q^{\eta H}W^{\eta H})A(WU-\epsilon WQU)& =\Sigma ,
        \end{aligned}
    \end{equation}
    where $Q$ satifies $Q+Q^{\eta H} = O$.}
    Let $V = V_{s}+\epsilon V_{i} \in \mathbb{DH}^{n\times n}$, where $V_{s}=WU\in \mathbb{H}^{n\times n}$ and $V_{i}=-WQU\in \mathbb{H}^{n\times n}$.
    Since that
    \begin{equation}\label{equ: VVetaH =I}
        \begin{aligned}
        VV^{\eta H} & = (V_{s}+\epsilon V_{i}) (V_{s}^{\eta H}+\epsilon V_{i}^{\eta H}) \\
        &= (WU-\epsilon WQU)(U^{\eta H}W^{\eta H}-\epsilon U^{\eta H}Q^{\eta H}W^{\eta H}) \\
        &= WW^{\eta H}-\epsilon (WQW^{\eta H}+WQ^{\eta H}W^{\eta H})\\
        &=I_{n},
        \end{aligned}
    \end{equation}
    where $Q$ satifies $Q+Q^{\eta H} = O$,  we achieve the desired form.
\end{proof}

The factorization of the $\eta$-Hermitian matrix is the generalization of the Autonne–Takagi factorization of the dual complex symmetric matrix.
Here we consider a $2\times 2$ dual real symmetric matrix $A$ as a quaternion matrix, where
\begin{equation}\label{equ:exampleA}
    A = \begin{bmatrix}
        0&1\\
        1&0
    \end{bmatrix}+\epsilon \begin{bmatrix}
        0&1\\
        1&0
    \end{bmatrix}.
\end{equation}
{\color{red}
Obviously, $A$ is both $\textbf{i}$-Hermitian and $\textbf{j}$-Hermitian. 
Utilizing the Theorem \ref{the: dualquaternionsymmetric}, we can achieve that $A = U\Sigma_{1}U^{\textbf{i}H}$, where $U$ is a quaternion unitary matrix
\begin{displaymath}
    U = \frac{1}{\sqrt{2}}\begin{bmatrix}
        1&\textbf{j}\\
        1&-\textbf{j}
    \end{bmatrix}, \Sigma_{1} = \begin{bmatrix}
        1+\epsilon & 0\\
        0& 1+\epsilon
    \end{bmatrix}.
\end{displaymath}
Also $A = V\Sigma_{2}V^{\textbf{j}H} = V\Sigma_{2}V^{\top}$, where $V$ is a complex unitary matrix
\begin{displaymath}
    V = \frac{1}{2}\begin{bmatrix}
        1+\textbf{i}&1-\textbf{i}\\
        1-\textbf{i}&1+\textbf{i}
    \end{bmatrix}, \Sigma_{2} = \begin{bmatrix}
        1+\epsilon & 0\\
        0& 1+\epsilon
    \end{bmatrix}.
\end{displaymath}
}
Using the conclusion from Theorem \ref{the: dualquaternionsymmetric}, we provide pseudocode for the Autonne-Takagi decomposition of dual quaternion $\eta$-Hermitian matrices in Algorithm \ref{alg: takagi quaternion}.

\begin{algorithm}[htb]
\DontPrintSemicolon
    \KwInput{Dual quaternion $\eta$-Hermitian matrix $A = A_{s}+A_{i}\epsilon \in \mathbb{DH}^{n \times n}$.}
    
    Compute the Autonne-Takagi factorization of $A_{s} = WDW^{\eta H}$ .

    Compute $B= W^{\eta H}A_{i}W =\begin{bmatrix}
            B_{11}& B_{12} &\cdots &  B_{1r} \\
             B_{12}^{\eta H} & B_{22} &\ddots &\vdots\\
            \vdots & \ddots &\ddots & B_{r-1,r}\\
           B_{1r}^{\eta H} &\cdots &  B_{r-1,r}^{\eta H} &  B_{rr}
        \end{bmatrix}$.

    Compute $ Q = \begin{bmatrix}
            O&\frac{B_{12}}{\lambda_{1}-\lambda_{2}} &\cdots & \frac{B_{1r}}{\lambda_{1}-\lambda_{r}} \\
            -\frac{ B_{12}^{\eta H}}{\lambda_{1}-\lambda_{2}} & O &\ddots &\vdots\\
            \vdots & \ddots &\ddots & \frac{ B_{r-1,r}}{\lambda_{r-1}-\lambda_{r}}\\
            -\frac{B_{1r}^{\eta H}}{\lambda_{1}-\lambda_{r}} &\cdots & -\frac{ B_{r-1,r}^{\eta H}}{\lambda_{r-1}-\lambda_{r}} & O
        \end{bmatrix}$.



    Compute the Autonne-Takagi factorization of $B_{ii} = U_{i}\Theta_{i}U_{i}^{\eta H}, i =1,2,\ldots,r$. 
    
    Set $\Sigma_{s} = D$ and $\Sigma_{i} = \diag(\Theta_{1},\Theta_{2},\ldots,\Theta_{r})$.

    Set $V_{s}=WU$ and $V_{i} = -WQU$, where $U= \diag(U_{1},U_{2},\ldots,U_{r})$.
    
    \KwOutput{$V = V_{s}+V_{i}\epsilon \in \mathbb{DH}^{n \times n}$ is unitary, and $\Sigma = \Sigma_{s}+\Sigma_{i}\epsilon \in \mathbb{DR}^{n \times n}$ is a dual diagonal matrix.}
    \caption{Autonne-Takagi Decomposition of the Dual Quaternion $\eta$-Hermitian Matrix}
    \label{alg: takagi quaternion}
\end{algorithm}

\begin{remark}\label{rem: Dcholesky}
    The Autonne-Takagi decomposition on real symmetric matrices is equivalent to SVD. 
    Therefore, using the content of this section, we can obtain the SVD of dual real symmetric matrices through the algorithm of Takagi decomposition.
    So we can get the Autoone-Takagi type dual SVD (ATDSVD), and pseudocode is given in Algorithm \ref{alg: ATDCD}.
\end{remark}

\begin{algorithm}[htb]
\DontPrintSemicolon
    \KwInput{Dual symmetric matrix $A = A_{s}+A_{i}\epsilon \in \mathbb{DR}^{n \times n}$.}
    
    Compute the SVD of $A_{s} = WDW^{\top}$ .

    Compute $B= W^{\top}A_{i}W =\begin{bmatrix}
            B_{11}& B_{12} &\cdots &  B_{1r} \\
             B_{12}^{\top} & B_{22} &\ddots &\vdots\\
            \vdots & \ddots &\ddots & B_{r-1,r}\\
           B_{1r}^{\top} &\cdots &  B_{r-1,r}^{\top} &  B_{rr}
        \end{bmatrix}$.

    Compute $ Q = \begin{bmatrix}
            O&\frac{B_{12}}{\lambda_{1}-\lambda_{2}} &\cdots & \frac{B_{1r}}{\lambda_{1}-\lambda_{r}} \\
            -\frac{ B_{12}^{\top}}{\lambda_{1}-\lambda_{2}} & O &\ddots &\vdots\\
            \vdots & \ddots &\ddots & \frac{ B_{r-1,r}}{\lambda_{r-1}-\lambda_{r}}\\
            -\frac{B_{1r}^{\top}}{\lambda_{1}-\lambda_{r}} &\cdots & -\frac{ B_{r-1,r}^{\top}}{\lambda_{r-1}-\lambda_{r}} & O
        \end{bmatrix}$.



    Compute the SVD of $B_{ii} = U_{i}\Theta_{i}U_{i}^{\top}, i =1,2,\ldots,r$. 
    
    Set $\Sigma_{s} = D$ and $\Sigma_{i} = \diag(\Theta_{1},\Theta_{2},\ldots,\Theta_{r})$.

    Set $V_{s}=WU$ and $V_{i} = -WQU$, where $U= \diag(U_{1},U_{2},\ldots,U_{r})$.

    \KwOutput{$\Sigma = \Sigma_{s}+\Sigma_{i}\epsilon \in \mathbb{DR}^{n \times n}$ is a dual real diagonal matrix, and $V = V_{s}+V_{i}\epsilon \in \mathbb{DR}^{n \times n}$ is a unitary dual matrix.}
    \caption{Autoone-Takagi type dual singular value decomposition (ATDSVD)}
    \label{alg: ATDCD}
\end{algorithm}

\section{LU Decomposition of Dual Matrices (DLU) }\label{sec: LU Decomposition of Dual Matrix}
In this section, we consider the LU decomposition of the dual matrix (DLU).
{\color{red}
Wang et al. \cite{wang2024algebraic} proposed a structure-preserving LU decomposition algorithm for dual quaternion matrices using an algebraic approach based on Gaussian transformations, which outlines the computational process for the LU decomposition of dual quaternion matrices. 
Our goal is to provide the expression and algorithm for the LU decomposition of dual matrices by leveraging the solution of the Sylvester equation.
}
Consider a square dual matrix $A\in A_{s}+A_{i}\epsilon \in \mathbb{DR}^{n \times n}$, assuming its full rank DLU decomposition $A = LU$, where $L = L_{s}+\epsilon L_{i}, U =U_{s}+\epsilon U_{i} \in \mathbb{DR}^{n \times n}$ exists. 
Then, using the matrix representation, we can obtain,
\begin{equation}\label{equ: LU matrix represention}
    \begin{bmatrix}
        A_{s} & A_{i} \\
        O & A_{s}
    \end{bmatrix} = \begin{bmatrix}
        L_{s} & L_{i} \\
        O & L_{s}
    \end{bmatrix}\begin{bmatrix}
        U_{s} & U_{i} \\
        O & U_{s}
    \end{bmatrix}
\end{equation}
This leads to two matrix equations,
\begin{equation}\label{equ: Ai = LsUi+LiUs}
    \begin{cases}
        A_{s} & = L_{s}U_{s}, \\
        A_{i} & = L_{s}U_{i} + L_{i}U_{s}.
    \end{cases}
\end{equation}
The first equation is the standard LU decomposition, and the second is a Sylvester equation. 
The idea of solving this system of matrix equations \eqref{equ: Ai = LsUi+LiUs} can be extended to the general rank-$k$ LU decomposition of dual matrices.
Here we first introduce a lemma for the general solution of the Sylvester equation.
\begin{lemma}\label{lem: solution of Sylvester equation}
    \cite{Baksalary1979TheME} Given $A\in \mathbb{R}^{m \times k}, B\in \mathbb{R}^{l \times n}$, and $C\in\mathbb{R}^{m \times n}$. The equation
    \begin{equation}\label{equ: Ax-YB =C}
        AX-YB = C,
    \end{equation}
    has a solution $X\in \mathbb{R}^{k \times n},Y \in \mathbb{R}^{m \times l}$ iff
    \begin{equation}
        (I_{m}-AA^{\dagger})C(I_{n}-B^{\dagger}B) = O_{m\times n}.
    \end{equation}
    If this is the case, then the general solution of (\ref{equ: Ax-YB =C}) has the form
    \begin{equation}
        \begin{cases}
        X = A^{\dagger}C+A^{\dagger}ZB+(I_{k}-A^{\dagger}A)W,\\
        Y = -(I_{m}-AA^{\dagger})CB^{\dagger}+Z-(I_{m}-AA^{\dagger})ZBB^{\dagger},
        \end{cases}
    \end{equation}
    with $W\in \mathbb{R}^{k\times n}$, $Z\in\mathbb{R}^{m\times l}$  being arbitrary and $A^\dagger$ representing the Moore-Penrose pseudoinverse of $A$.   
\end{lemma}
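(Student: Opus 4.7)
The plan is to establish the lemma in three stages: prove necessity of the projector condition, verify that the claimed parametric formulas solve the equation whenever the condition holds, and then show that every solution arises in this form, which disposes of both sufficiency and completeness of the parameterization.

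For necessity, I would multiply $AX-YB=C$ on the left by $(I_{m}-AA^{\dagger})$ and on the right by $(I_{n}-B^{\dagger}B)$. Using the Moore--Penrose identities $AA^{\dagger}A=A$ and $BB^{\dagger}B=B$, the $AX$ term is annihilated on the left and the $YB$ term is annihilated on the right, leaving precisely $(I_{m}-AA^{\dagger})C(I_{n}-B^{\dagger}B)=O$. This direction is purely algebraic and presents no difficulty.

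For sufficiency, assume the projector condition holds. I would first test the particular choice $W=O$, $Z=O$, which gives $X_{0}=A^{\dagger}C$ and $Y_{0}=-(I_{m}-AA^{\dagger})CB^{\dagger}$, and compute
\begin{equation*}
AX_{0}-Y_{0}B=AA^{\dagger}C+(I_{m}-AA^{\dagger})CB^{\dagger}B.
\end{equation*}
Writing $C=AA^{\dagger}C+(I_{m}-AA^{\dagger})C$ and substituting $(I_{m}-AA^{\dagger})C=(I_{m}-AA^{\dagger})CB^{\dagger}B$ from the hypothesis identifies this with $C$. Plugging the full parametric $(X,Y)$ back into $AX-YB$ should then produce cancellations: $A(I_{k}-A^{\dagger}A)W=O$ from $AA^{\dagger}A=A$, and the $Z$-dependent pieces collapse because $[AA^{\dagger}+(I_{m}-AA^{\dagger})]ZB=ZB$. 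This confirms that every pair $(X,Y)$ of the claimed form is a solution.

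For the final stage, let $(X,Y)$ be an arbitrary solution and set $\tilde{X}=X-X_{0}$, $\tilde{Y}=Y-Y_{0}$, so that $A\tilde{X}=\tilde{Y}B$. I would decompose $\tilde{X}=A^{\dagger}A\tilde{X}+(I_{k}-A^{\dagger}A)\tilde{X}$, which already recovers an $(I_{k}-A^{\dagger}A)W$ contribution with $W=\tilde{X}$. From $A\tilde{X}=\tilde{Y}B$ one has $AA^{\dagger}\tilde{Y}B=\tilde{Y}B$, and combined with $\tilde{Y}BB^{\dagger}B=\tilde{Y}B$ this lets me take $Z=\tilde{Y}$ and verify that the parametric formulas reproduce $(X,Y)$ exactly. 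The main obstacle I anticipate is precisely this bookkeeping in the homogeneous parameterization: one must track how the range-of-$A$ and row-space-of-$B$ components are split between the $X$- and $Y$-pieces and check that the parameters $W$ and $Z$ can be chosen without over- or under-counting degrees of freedom. Once this accounting is handled via the self-adjoint projectors $AA^{\dagger}$, $A^{\dagger}A$, $BB^{\dagger}$, $B^{\dagger}B$, both sufficiency and the generality of the parameterization follow together.
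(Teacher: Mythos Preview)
The paper does not prove this lemma; it is quoted as a known result from \cite{Baksalary1979TheME} and used as a black box in the proof of Theorem~\ref{the: DLU decomposition}. Your three-stage argument (necessity via projector annihilation, direct verification of the parametric family, and recovery of an arbitrary solution by taking $Z=\tilde{Y}$, $W=\tilde{X}$ in the homogeneous equation $A\tilde{X}=\tilde{Y}B$) is correct and is essentially the standard proof of the Baksalary--Kala result, so nothing further is needed.
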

With the help of Lemma \ref{lem: solution of Sylvester equation}, we can solve the dual LU decomposition of dual matrices.
\begin{theorem}\label{the: DLU decomposition}
    (DLU Decomposition) Considering $A = A_{s} + A_{i}\epsilon \in \mathbb{DR}^{m \times n}$ with $m \geq n$. 
    Assume that $A_{s} = L_{s}U_{s}$ is the LU decomposition of $A_{s}$, where $k \leq \min\{m,n\}$, $L_{s} \in \mathbb{R}^{m \times k}$ is a lower trapezoidal matrix with rank $k$, and $U_{s} \in \mathbb{R}^{k \times n}$ is an upper trapezoidal matrix with rank $k$.
    Then the LU decomposition of the dual matrix $A =LU$, where $L = L_{s}+L_{i}\epsilon \in \mathbb{DR}^{m \times k} $ is a lower trapezoidal dual matrix, and $U = U_{s}+U_{i}\epsilon \in \mathbb{DR}^{k \times n}$ is an upper trapezoidal dual matrix exist iff
    \begin{equation}\label{equ: (I-LsLsT)Ai(I-UsUs)}
        (I_{m}-L_{s}L_{s}^{\dagger})A_{i}(I_{n}-U_{s}^{\dagger}U_{s}) = O_{m\times n}.
    \end{equation}
    Additionally, the expressions of $L_{i}$, $U_{i}$, the infinitesimal parts of $L,U$, are
    \begin{equation}\label{equ: Li Ui expression}
        \begin{cases}
            L_{i} & = (I_{m}-L_{s}L_{s}^{\dagger})A_{i}U_{s}^{\dagger} + L_{s}P, \\
            U_{i}& = L_{s}^{\dagger}A_{i}-PU_{s},
        \end{cases}
    \end{equation}
    where $P\in \mathbb{R}^{k \times k}$. 
    We can obtain the expression of the upper triangular part of $P$ as
    \begin{equation}
        \begin{cases}
            P(2:k,1) & =\frac{1}{U_{s}(1,1)}B(2:k,1),\\
            P(3:k,2) & = \frac{1}{U_{s}(2,2)}(B(3:k,2)-P(3:k,1)U_{s}(1,2)), \\
            &\ldots \\
            P(k,k-1) & = \frac{1}{U_{s}(k-1,k-1)}(B(k,k-1)-P(k,1:k-2)U_{s}(1:k-2,k-1)),
        \end{cases}
    \end{equation}
    and the expression of the lower triangular part of $P$ as
    \begin{equation}
        \begin{cases}
            P(1,2:k) & = \frac{1}{L_{s}(1,1)}(-C(1,2:k)),\\
            P(2,3:k) & =\frac{1}{L_{s}(2,2)}(-C(2,3:k)-L_{s}(2,1)P(1,3:k)),\\
            & \ldots \\
            P(k-1,k)&=\frac{1}{L_{s}(k-1,k-1)}(-C(k-1,k)-L_{s}(k-1,1:k-2)P(1:k-2,k)),
        \end{cases}
    \end{equation}
    where $B = L_{s}^{\dagger}A_{i} \in \mathbb{R}^{k \times n}$, and $C = (I_{m}-L_{s}L_{s}^{\dagger})A_{i}U_{s}^{\dagger} \in \mathbb{R}^{m\times k} $. 
\end{theorem}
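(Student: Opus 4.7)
The plan is to convert the dual equation $A = LU$ into a pair of real matrix equations via the matrix representation of dual numbers. Equating the standard and infinitesimal parts of \eqref{equ: LU matrix represention} yields $A_s = L_s U_s$ (given) together with the Sylvester-type equation
\begin{equation*}
L_s U_i + L_i U_s = A_i, \qquad \text{equivalently} \qquad L_s U_i - (-L_i) U_s = A_i.
\end{equation*}
Applying Lemma \ref{lem: solution of Sylvester equation} with $A \mapsto L_s$, $B \mapsto U_s$, $C \mapsto A_i$, $X \mapsto U_i$, $Y \mapsto -L_i$ produces exactly the existence condition \eqref{equ: (I-LsLsT)Ai(I-UsUs)} and a two-parameter family of solutions depending on arbitrary $W, Z$.

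Next I would simplify this family using the rank assumptions. Since $L_s \in \mathbb{R}^{m \times k}$ has full column rank $k$, $L_s^\dagger L_s = I_k$, so the $(I_k - L_s^\dagger L_s) W$ term in the general solution vanishes. Since $U_s \in \mathbb{R}^{k \times n}$ has full row rank $k$, $U_s U_s^\dagger = I_k$, so $Z - (I_m - L_s L_s^\dagger) Z U_s U_s^\dagger$ collapses to $L_s L_s^\dagger Z$. Introducing $P := -L_s^\dagger Z \in \mathbb{R}^{k \times k}$ as the free parameter, the remaining $L_s L_s^\dagger Z$ term becomes $-L_s P$ and $L_s^\dagger Z U_s$ becomes $-P U_s$, which gives the expressions \eqref{equ: Li Ui expression}.

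Then I would impose the trapezoidal structure constraints. Writing $B := L_s^\dagger A_i$ and $C := (I_m - L_s L_s^\dagger) A_i U_s^\dagger$, the conditions that $U_i = B - P U_s$ be upper trapezoidal and $L_i = C + L_s P$ be lower trapezoidal force the strictly lower entries of $B - P U_s$ and the strictly upper entries of $C + L_s P$ to vanish. Because $U_s$ is upper trapezoidal, the $(i,j)$ entry of $P U_s$ with $i > j$ expands as $\sum_{l=1}^{j} P_{i,l} (U_s)_{l,j}$, so setting this equal to $B_{i,j}$ and isolating $P_{i,j}$ yields the stated column-by-column forward substitution for the strictly lower triangular entries of $P$. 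Symmetrically, because $L_s$ is lower trapezoidal, $(L_s P)_{i,j} = \sum_{l=1}^{i} L_s(i,l) P(l,j)$ for $i < j$; setting this equal to $-C_{i,j}$ and isolating $P_{i,j}$ yields the stated row-by-row forward substitution for the strictly upper triangular entries of $P$.

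The main obstacle I anticipate is bookkeeping: verifying that the simplifications using $L_s^\dagger L_s = I_k$ and $U_s U_s^\dagger = I_k$ correctly absorb the arbitrary $W, Z$ into the single matrix $P$, and then carefully matching the index ranges in the triangular recurrences so that the diagonal entries of $P$ remain genuinely free (they do not appear in either the strictly upper or strictly lower constraints, consistent with the non-uniqueness of LU up to a diagonal scaling). Once these two checks are done, the existence condition, the closed-form representation \eqref{equ: Li Ui expression}, and the triangular substitution formulas for $P$ all follow.
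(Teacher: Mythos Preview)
Your proposal is correct and follows essentially the same approach as the paper's own proof: both reduce the dual equation to the Sylvester-type equation $L_sU_i+L_iU_s=A_i$, invoke Lemma~\ref{lem: solution of Sylvester equation} to obtain the existence condition and the general solution, simplify via $L_s^\dagger L_s=I_k$ and $U_sU_s^\dagger=I_k$ to a single free parameter $P\in\mathbb{R}^{k\times k}$, and then determine the strictly lower and strictly upper triangular parts of $P$ by forward substitution from the trapezoidal constraints on $U_i$ and $L_i$. The only cosmetic difference is that the paper applies the lemma with $B\mapsto -U_s$ and $Y\mapsto L_i$, whereas you use $B\mapsto U_s$ and $Y\mapsto -L_i$; after the sign is absorbed into your definition $P:=-L_s^\dagger Z$ both routes yield identical formulas.
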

\begin{proof}
    Assume that the LU decomposition of the dual matrix $A$ exists, the standard and infinitesimal parts of $A$ can be expressed as
    \begin{equation}\label{equ: Ai = LiUs+LsUi}
        \begin{cases}
            A_{s} & = L_{s}U_{s}, \\
            A_{i} & = L_{i}U_{s} + L_{s}U_{i}.
        \end{cases}
    \end{equation}
    The first equation in \eqref{equ: Ai = LiUs+LsUi} can be obtained from the LU decomposition of the standard part $A_{s}$.
    Treating $L_{i}$ and $U_{i}$ as unknowns, the second equation in \eqref{equ: Ai = LiUs+LsUi} can be seen as solving the generalized Sylvester equation as in Lemma \ref{lem: solution of Sylvester equation}.
    $L_{i}$ and $U_{i}$ have a general solution 
    \begin{equation}\label{equ: Li Ui by gSylvester}
        \begin{cases}
            L_{i} &= -(I_{m}-L_{s}L_{s}^{\dagger})A_{i}(-U_{s})^{\dagger}+Z-(I_{m}-L_{s}L_{s}^{\dagger})Z(-U_{s})(-U_{s})^{\dagger}\\
            U_{i} &= L_{s}^{\dagger}A_{i}+L_{s}^{\dagger}Z(-U_{s})+(I_{k}-L_{s}^{\dagger}L_{s})W,
        \end{cases}
    \end{equation}
    with $W\in\mathbb{R}^{k\times n}$ and $Z\in \mathbb{R}^{m\times k} $ being arbitrary iff
    \begin{equation}
        (I_{m}-L_{s}L_{s}^{\dagger})A_{i}(I_{n}-U_{s}^{\dagger}U_{s}) = O_{m\times n}.
    \end{equation}
    Considering that $L_{s}$ is a lower trapezoidal matrix of rank $k$, together with $L_{s}^{\dagger}L_{s} = I_{k}$, the second equation in \eqref{equ: Li Ui by gSylvester} becomes
    \begin{equation}
        U_{i} = L_{s}^{\dagger}A_{i}+L_{s}^{\dagger}Z(-U_{s})+(I_{k}-L_{s}^{\dagger}L_{s})W  = L_{s}^{\dagger}A_{i}-L_{s}^{\dagger}ZU_{s}.
    \end{equation}
    Similarly, $U_{s}$ is an upper trapezoidal matrix of rank $k$. 
    Combined with $U_{s}U_{s}^{\dagger} = I_{k}$, the first equation in \eqref{equ: Li Ui by gSylvester} becomes
    \begin{equation}
        L_{i}  =  -(I_{m}-L_{s}L_{s}^{\dagger})A_{i}(-U_{s})^{\dagger}+Z-(I_{m}-L_{s}L_{s}^{\dagger})Z(-U_{s})(-U_{s})^{\dagger} = (I_{m}-L_{s}L_{s}^{\dagger})A_{i}U_{s}^{\dagger}++L_{s}L_{s}^{\dagger}Z.
    \end{equation}
    To determine the exact expressions for $L_{i}$ and $U_{i}$, we need to determine the values of the $m\times k$ elements in $Z$.
    Recognizing that $L_{i}^{\dagger}$ is of size $k \times m$ and always appears before $Z$, let's define $P = L_{i}^{\dagger}Z\in \mathbb{R}^{k \times k}$. 
    This means we only need to determine the values of $k\times k (\leq m \times k)$ elements to determine the exact expressions for $L_{i}$ and $U_{i}$.
    The expressions for $L_{i}$ and $U_{i}$ are 
    \begin{equation}\label{equ: Li Ui in proof}
        \begin{cases}
            L_{i} & = (I_{m}-L_{s}L_{s}^{\dagger})A_{i}U_{s}^{\dagger} + L_{s}P, \\
            U_{i}& = L_{s}^{\dagger}A_{i}-PU_{s},
        \end{cases}
    \end{equation}
    with $P\in \mathbb{R}^{k \times k}$ being arbitrary. 
    It is worth noting that $U_{i}$ is an upper trapezoidal matrix. 
    For the second equation in \eqref{equ: Li Ui in proof}, when we set $B=L_{s}^{\dagger}A_{i} \in \mathbb{R}^{k \times n}$ and expand the matrix equation column-wise, we obtain the following result,
    \begin{equation}
        U_{i}(:,t) = B(:,t) - PU_{s}(:,t), \quad t = 1,2,\ldots,n.
    \end{equation}
    Since $U_{i}$ and $U_{s}$ are upper trapezoidal matrices with zeros in their lower triangle, we can uniquely determine the lower triangular part of $P$. 
    This can be obtained by solving the following equations,
    \begin{equation}\label{equ: matrix equation of lower P}
        \begin{cases}
            B(2:k,1) & = P(2:k,1)U_{s}(1,1),\\
            B(3:k,2) & = P(3:k,1)U_{s}(1,2)+P(3:k,2)U_{s}(2,2), \\
           & \ldots \\
           B(k,k-1) & = \sum_{l=1}^{k-1}P(k,l)U_{s}(l,k-1). 
        \end{cases}
    \end{equation}
    By solving \eqref{equ: matrix equation of lower P}, we obtain the expression for the lower triangular part of $P$ as follows,
    \begin{equation}
        \begin{cases}
            P(2:k,1) &= \frac{1}{U_{s}(1,1)}B(2:k,1),\\
            P(3:k,2) & = \frac{1}{U_{s}(2,2)}(B(3:k,2)-P(3:k,1)U_{s}(1,2)),\\
            &\ldots \\
            P(k,k-1) & = \frac{1}{U_{s}(k-1,k-1)}(B(k,k-1)-P(k,1:k-2)U_{s}(1:k-2,k-1)).
        \end{cases}
    \end{equation}
    Similarly, in the first equation of \eqref{equ: Li Ui in proof}, we set $C = (I_{m}-L_{s}L_{s}^{\dagger})A_{i}U_{s}^{\dagger} \in \mathbb{R}^{m\times k}$ and expand it row-wise to obtain,
    \begin{equation}
        L_{i}(t,:) = C(t,:)+L_{s}(t,:)P, \quad t = 1,2,\ldots, m
    \end{equation}
    Noting that $L_{i}$ and $L_{s}$ are lower trapezoidal matrices, by utilizing the upper triangular parts of $L_{s}$ and $L_{i}$ being 0, we can uniquely determine the upper triangular part of $P$. 
    This can be obtained by solving the following equations,
    \begin{equation}\label{equ: matrix equation of upper P}
        \begin{cases}
            -C(1,2:k) & = L_{s}(1,1)P(1,2:k),\\
            -C(2,3:k) & = L_{s}(2,1)P(1,3:k)+L_{s}(2,2)P(2,3:k),\\
            & \ldots \\
            -C(k-1,k) & = \sum_{l=1}^{k-1}L_{s}(k-1,l)P(l,k),
        \end{cases}
    \end{equation}
    By solving \eqref{equ: matrix equation of upper P}, we obtain the expression for the upper triangular part of $P$,
    \begin{equation}
        \begin{cases}
            P(1,2:k) &= \frac{1}{L_{s}(1,1)}(-C(1,2:k)), \\
            P(2,3:k) & =\frac{1}{L_{s}(2,2)}(-C(2,3:k)-L_{s}(2,1)P(1,3:k)),\\
            & \ldots \\
            P(k-1,k)&=\frac{1}{L_{s}(k-1,k-1)}(-C(k-1,k)-L_{s}(k-1,1:k-2)P(1:k-2,k)).
        \end{cases}
    \end{equation}
    We have now completed all the proofs in the theorem.
\end{proof}

\begin{remark}\label{rem: condition fail}
    When designing the dual matrix LU algorithm, if \eqref{equ: (I-LsLsT)Ai(I-UsUs)} is not satisfied, can we still obtain other approximation results? The answer is yes. Viewing the LU decomposition of the dual matrix $A=A_{s} + A_{i}\epsilon$ as a rank-$k$ approximation of $A$, we refer to the analysis provided by Wei et al. \cite{wei2024singular} for rank-$k$ approximations of dual matrices. In the metric induced by the Frobenius norm, as long as $A$'s standard part $A_{s}$ is the optimal rank-$k$ approximation $A_{s}^{(k)} = L_{s}^{(k)}U_{s}^{(k)}$, the obtained decomposition is always the optimal rank-$k$ approximation. In the metric of the quasi-norm (if $A_{s}\neq B_{s}$, $d_{*}(A,B) =\|A_{s}-B_{s}\|_{F}+\frac{\|A_{i}-B_{i}\|_{F}^{2}}{2\|A_{s}-B_{s}\|_{F}}\epsilon$; otherwise,  $d_{*}(A,B)=\|A_{i}-B_{i}\|_{F}\epsilon$.), when $A$'s standard part $A_{s}^{(k)} = L_{s}^{(k)}U_{s}^{(k)}$ is the optimal rank-$k$ approximation, then by defining $A_{i}^{(k)}= A_{i}-(I_{m}-L_{s}^{(k)}L_{s}^{(k)\dagger})A_{i}(I_{n}-U_{s}^{(k)\dagger}U_{s}^{(k)})$, the new $A_{i}^{(k)}$ satisfies condition \eqref{equ: (I-LsLsT)Ai(I-UsUs)}, and using the LU decomposition calculated according to \cref{the: DLU decomposition} provides the optimal rank-$k$ approximation for the dual matrix $A$.
\end{remark}

\begin{remark}\label{rem: DLU for C&H}
    According to Theorem \ref{the: DLU decomposition}, the LU decomposition of dual matrices over the real numbers holds under the condition that the LU decomposition over the real numbers exists and the generalized Sylvester equation in Lemma \ref{lem: solution of Sylvester equation} has a solution. 
    When considering the complex and quaternion cases, the LU decomposition exists for both complex and quaternion matrices \cite{zhang1997quaternions}. 
    However, while Lemma \ref{lem: solution of Sylvester equation} holds over the complex field \cite{Baksalary1979TheME}, its validity over the quaternions has not yet been studied.
    Consequently, a corresponding LU decomposition for dual complex matrices exists, whereas the existence of an LU decomposition for dual quaternion matrices remains unknown.
\end{remark}

Based on the analysis of Theorem \ref{the: DLU decomposition} and  Remark \ref{rem: condition fail}, we provide the pseudocode for the LU decomposition of the dual matrix in Algorithm \ref{alg: LU Decomposition of Dual Matrix}.

\begin{algorithm}[htb]
\DontPrintSemicolon
    \KwInput{$A = A_{s}+A_{i}\epsilon \in \mathbb{DR}^{m \times n}$, integer $k$.}
    
     Compute the rank-$k$ LU decomposition of $A_{s} = L_{s}U_{s}$ .

    \If{$(I_{m}-L_{s}L_{s}^{\dagger})A_{i}(I_{n}-U_{s}^{\dagger}U_{s}) \neq O_{m\times n}$}{

    $A_{i} = A_{i} - (I_{m}-L_{s}L_{s}^{\dagger})A_{i}(I_{n}-U_{s}^{\dagger}U_{s})$.}
    
    $B = L_{s}^{\dagger}A_{i}, \quad C = (I_{m}-L_{s}L_{s}^{\dagger})A_{i}U_{s}^{\dagger}$. 

    $P = zeros(k,k)$.

    $P(2:k,1) = B(2:k,1)/U_{s}(1,1), \quad P(1,2:k) = -C(1,2:k)/L_{s}(1,1)$.
    
    \For{$i = 2:k-1$}{

        $P(i+1:k,i)=(B(i+1:k,i)-P(i+1:k,1:i-1)U_{s}(1:i-1,i))/U_{s}(i,i)$.

        $P(i,i+1:k)=(-C(i,i+1:k)-L_{s}(i,1:i-1,1)P(1:i-1,i+1:k))/L_{s}(i,i)$.
    }

    $L_{i} = C+L_{s}P$.

    $U_{i} = B-PU_{s}$.

    \KwOutput{$L = L_{s}+L_{i}\epsilon \in \mathbb{DR}^{m \times k}$ is a lower trapezoidal dual matrix, and $U = U_{s}+U_{i}\epsilon \in \mathbb{DR}^{k \times n}$ is an upper trapezoidal dual matrix.}
    \caption{LU Decomposition of the Dual Matrix (DLU)}
    \label{alg: LU Decomposition of Dual Matrix}
\end{algorithm}

\subsection{DMPGI and Rank of Dual Matrices}\label{sec: DMPGI and Rank of Dual Matrices}
It is evident that there exists an LU decomposition $A = LU$, where $L\in \mathbb{R}^{m \times k}$ and $U\in \mathbb{R}^{k \times n}$, for a given rank-$k$ matrix $A\in \mathbb{R}^{m \times n}$. 
The question arises as to whether a similar conclusion can be drawn for dual matrices.
Wang, Cui, and Liu \cite{wang2023dual} introduced the concept of a full-rank dual matrix, where the standard part is a matrix of full rank. 
They subsequently devised the rank-$k$ decomposition for such dual matrices.
The rank-$k$ decomposition $A=BC$ of a dual matrix $A$ exists iff the rank-$k$ decomposition $A_{s}=B_{s}C_{s}$ of its standard part $A_{s}$ exists and satisfies $(I_{m}-B_{s}B_{s}^{\dagger})A_{i}(I_{n}-C_{s}^{\dagger}C_{s}) = O_{m\times n}$.
The following lemma gives the conditions for the existence of DMPGI for a rank-$k$ dual matrix.

\begin{lemma}\label{lem: MPDGI}
    \cite{wang2021characterizations} Let $A = A_{s} + \epsilon A_{i} \in\mathbb{DR}^{m \times n}$, and $\rank(A_{s}) = k $. 
    Therefore, the existence of the DMPGI for matrix $A$ is indicated if and only if
    \begin{equation}
        (I_{m}-A_{s}A_{s}^{\dagger})A_{i}(I_{n}-A_{s}^{\dagger}A_{s}) = O_{m \times n}.
    \end{equation}
    Moreover,
    \begin{equation}
        A^{\dagger} = A_{s}^{\dagger} - \epsilon(A_{s}^{\dagger}A_{i}A_{s}^{\dagger}-(A_{s}^{\top}A_{s})^{\dagger}A_{i}^{\top}(I_{m}-A_{s}A_{s}^{\dagger})-(I_{n}-A_{s}^{\dagger}A_{s})A_{i}^{\top}(A_{s}A_{s}^{\top})^{\dagger}).
    \end{equation}
\end{lemma}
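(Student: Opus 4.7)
The plan is to exploit the fact that $\epsilon^{2}=0$ makes every dual matrix equation split into two real equations: one for the standard part and one for the $\epsilon$-coefficient, the latter being linear in the unknown infinitesimal part. I would write the candidate DMPGI as $X=X_{s}+\epsilon X_{i}$ and expand each of the four Penrose conditions from Definition \ref{def: DMPGI}. Collecting the standard parts of $AXA=A$, $XAX=X$, $AX=(AX)^{\top}$, $XA=(XA)^{\top}$ yields exactly the classical Penrose conditions for $X_{s}$ relative to $A_{s}$, so $X_{s}=A_{s}^{\dagger}$ is forced uniquely. This first step is essentially free once the dual product is expanded.

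For the forward direction, substituting $X_{s}=A_{s}^{\dagger}$ into the $\epsilon$-coefficient of $AXA=A$ gives the matrix equation
\[
A_{s}X_{i}A_{s}=A_{i}-A_{i}A_{s}^{\dagger}A_{s}-A_{s}A_{s}^{\dagger}A_{i}.
\]
Pre-multiplying by $(I_{m}-A_{s}A_{s}^{\dagger})$ and post-multiplying by $(I_{n}-A_{s}^{\dagger}A_{s})$ annihilates the left-hand side, since $(I_{m}-A_{s}A_{s}^{\dagger})A_{s}=O$ and $A_{s}(I_{n}-A_{s}^{\dagger}A_{s})=O$, and kills the two correction terms on the right for the same reason, leaving the stated compatibility condition $(I_{m}-A_{s}A_{s}^{\dagger})A_{i}(I_{n}-A_{s}^{\dagger}A_{s})=O_{m\times n}$ as necessary. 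Viewing the remaining infinitesimal Penrose equations the same way simply produces additional symmetry requirements that are automatically consistent with this single obstruction.

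For the converse, assuming compatibility, I would take the explicit $X_{i}$ given in the statement and verify each of the four infinitesimal Penrose equations directly. The crucial identities are $A_{s}^{\dagger}=(A_{s}^{\top}A_{s})^{\dagger}A_{s}^{\top}=A_{s}^{\top}(A_{s}A_{s}^{\top})^{\dagger}$, together with the symmetry and idempotency of the orthogonal projectors $A_{s}A_{s}^{\dagger}$ and $A_{s}^{\dagger}A_{s}$. The three summands in $X_{i}$ are designed so that the first handles the range of $A_{s}$, while the other two account for contributions of $A_{i}$ orthogonal to that range through the two least-squares normal-equation inverses. The hard part will be the bookkeeping: each of the four equations expands into a sum of several terms, and one must repeatedly apply $(I_{m}-A_{s}A_{s}^{\dagger})A_{s}=O$, $A_{s}(I_{n}-A_{s}^{\dagger}A_{s})=O$, and the compatibility hypothesis to collapse cross terms. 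Once those systematic cancellations are carried out, uniqueness of $X_{i}$ follows from the standard Penrose argument applied to the infinitesimal system, completing both directions simultaneously.
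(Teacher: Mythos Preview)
The paper does not prove this lemma; it is quoted verbatim from \cite{wang2021characterizations} and used as a black box in the subsequent equivalence theorem. So there is no ``paper's own proof'' to compare against here.

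That said, your plan is sound and is essentially the argument one finds in the cited reference. Expanding the four Penrose conditions over $\mathbb{DR}$ and separating standard and $\epsilon$-parts is the only natural move, and your derivation of the necessary condition from the $\epsilon$-part of $AXA=A$ is correct line by line. For the converse, direct verification of the displayed $X_{i}$ against all four infinitesimal equations is exactly what is done in the literature; the identities you list ($A_{s}^{\dagger}=(A_{s}^{\top}A_{s})^{\dagger}A_{s}^{\top}=A_{s}^{\top}(A_{s}A_{s}^{\top})^{\dagger}$ and the projector properties) are precisely the tools needed. One small point: your final sentence about uniqueness is slightly glib. The cleanest route is not to redo a Penrose-style argument on the infinitesimal system in isolation, but simply to observe that the classical uniqueness proof $X=XAX=XAYAX=\cdots=Y$ goes through verbatim in the ring $\mathbb{DR}^{m\times n}$ once existence is established, since it uses only the four defining relations and associativity. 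That gives uniqueness of the full dual matrix $X$, hence of $X_{i}$, without further computation.
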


Motivated by the rank-$k$ decomposition of dual matrices and considering Lemma \ref{lem: MPDGI}, we conjecture that the existence of a rank-$k$ decomposition, the existence of DLU decomposition, and the existence of DMPGI are interrelated. 
The theorem providing the proof is presented below.

\begin{theorem}
Consider a matrix $A = A_{s}+\epsilon A_{i}\in \mathbb{DR}^{m \times n} (m \geq n)$ with $\rank(A_{s}) = k \leq \min\{m,n\}$. 
Let $A_{s}=L_{s}U_{s}$ represent the LU decomposition of $A_{s}$, where $L_{s} \in \mathbb{R}^{m \times k}$ is a lower trapezoidal matrix with rank $k$, and $U_{s} \in \mathbb{R}^{k \times n}$ is an upper trapezoidal matrix with rank $k$.
The following conditions are equivalent,
    \begin{enumerate}[label = (\arabic*)]
        \item the DMPGI of $A$ exists;
        \item the dual rank-$k$ decomposition of $A$ exists;
        \item the DLU decomposition of $A$ exists; 
        \item $(I_{m}-L_{s}L_{s}^{\dagger})A_{i}(I_{n}-U_{s}^{\dagger}U_{s}) = O_{m\times n}$.
    \end{enumerate}
\end{theorem}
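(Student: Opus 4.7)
\medskip
\noindent\textbf{Proof proposal.} The plan is to show that all four conditions collapse to one and the same matrix equation, namely
\[
(I_m - A_s A_s^\dagger)\, A_i\, (I_n - A_s^\dagger A_s) = O_{m\times n},
\]
by using a simple projector identity: for \emph{any} rank-$k$ factorization $A_s = B_s C_s$ with $B_s \in \mathbb{R}^{m\times k}$ of full column rank and $C_s \in \mathbb{R}^{k\times n}$ of full row rank, one has $B_s B_s^\dagger = A_s A_s^\dagger$ and $C_s^\dagger C_s = A_s^\dagger A_s$. The equivalence of the four statements then follows by replaying this identity in each of the three specific factorizations that appear (the LU factors, an arbitrary rank-$k$ pair, and the Moore--Penrose inverse itself).

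The key intermediate step I would prove first is the projector identity. Since $A_s = B_s C_s$ and $\rank(B_s) = k$, the column space of $A_s$ equals the column space of $B_s$; since orthogonal projectors onto a subspace are unique, $A_s A_s^\dagger$ and $B_s B_s^\dagger$ (both being such projectors) coincide. The same argument applied to row spaces gives $A_s^\dagger A_s = C_s^\dagger C_s$. Applying this first to the LU factorization $A_s = L_s U_s$ (where $L_s$ is $m \times k$ of rank $k$ and $U_s$ is $k \times n$ of rank $k$, by hypothesis), the identity of Theorem~\ref{the: DLU decomposition} becomes the identity of Lemma~\ref{lem: MPDGI}, giving $(3) \Leftrightarrow (4) \Leftrightarrow (1)$.

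For $(1) \Leftrightarrow (2)$, I would invoke the characterization of Wang, Cui and Liu cited in Section~\ref{sec: DMPGI and Rank of Dual Matrices}: a dual rank-$k$ decomposition $A = BC$ exists iff a rank-$k$ decomposition $A_s = B_s C_s$ exists (automatic here since $\rank(A_s) = k$) and $(I_m - B_s B_s^\dagger) A_i (I_n - C_s^\dagger C_s) = O$. Again by the projector identity, this last condition is independent of the choice of $B_s, C_s$ and is equivalent to the single intrinsic condition $(I_m - A_s A_s^\dagger) A_i (I_n - A_s^\dagger A_s) = O$, which by Lemma~\ref{lem: MPDGI} is $(1)$. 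Chaining everything yields $(1) \Leftrightarrow (2) \Leftrightarrow (3) \Leftrightarrow (4)$.

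The main obstacle is essentially a bookkeeping one: one must keep track of which factorization is being used in each citation and verify that the projector identity lets us pass freely between them. The mild subtlety is to confirm that the LU factors $L_s, U_s$ genuinely furnish a rank-$k$ decomposition of $A_s$ in the sense required, which is immediate from the hypothesis that $L_s$ has rank $k$ and $U_s$ has rank $k$. No new algebraic manipulation is required beyond the projector lemma, so the argument should fit in a short paragraph once the equivalence $B_s B_s^\dagger = A_s A_s^\dagger$ is established.
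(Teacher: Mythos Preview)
Your proposal is correct and is essentially the same argument as the paper's: both rely on the projector identity $B_sB_s^{\dagger}=A_sA_s^{\dagger}$, $C_s^{\dagger}C_s=A_s^{\dagger}A_s$ for any full-rank factorization $A_s=B_sC_s$, together with Theorem~\ref{the: DLU decomposition}, Lemma~\ref{lem: MPDGI}, and the Wang--Cui--Liu characterization. The only cosmetic difference is that the paper runs the implications in a cycle $(4)\Rightarrow(3)\Rightarrow(2)\Rightarrow(1)\Rightarrow(4)$, whereas you reduce all four conditions directly to the single intrinsic equation $(I_m-A_sA_s^{\dagger})A_i(I_n-A_s^{\dagger}A_s)=O$.
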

\begin{proof}
    $(4)\Rightarrow (3)$: By Theorem \ref{the: DLU decomposition}, this is apparent.
    
    $(3)\Rightarrow (2)$: Assume that the DLU decomposition of $A$ exists, then there exists $A=LU$, where $L=L_{s}+L_{s}\in \mathbb{DR}^{m\times k}$ and $U=U_{s}+U_{i}\in \mathbb{DR}^{k \times n}$.
    The DLU decomposition of $A$ is the dual rank-$k$ decomposition of $A$.

    $(2) \Rightarrow (1)$: Assuming the existence of the dual rank-$k$ decomposition of $A$, denoted as $A=BC$, where $A_{s}=B_{s}C_{s}$, with $(I_{m}-B_{s}B_{s}^{\dagger})A_{i}(I_{n}-C_{s}^{\dagger}C_{s}) = O_{m\times n}$.
    Considering that $\rank(A_{s}) = k$, then $A_{s}=B_{s}C_{s}$ is a full rank decomposition. $B_{s}$ has full column rank, and $C_{s}$ has full row rank. 
    This implies $B_{s}B_{s}^{\dagger}=A_{s}A_{s}^{\dagger}$ and $C_{s}^{\dagger}C_{s} = A_{s}^{\dagger}A_{s}$.
    So we have $(I_{m}-A_{s}A_{s}^{\dagger})A_{i}(I_{n}-A_{s}^{\dagger}A_{s}) = O_{m \times n}$, and combined with Lemma \ref{lem: MPDGI}, then the DMPGI of $A$ exists.

    $(1) \Rightarrow  (4)$: According to Lemma \ref{lem: MPDGI}, the DMPGI of $A$ exists, so we have $(I_{m}-A_{s}A_{s}^{\dagger})A_{i}(I_{n}-A_{s}^{\dagger}A_{s}) = O_{m \times n}$.
    Note that $\rank(A_{s}) = k$, so $A_{s}=L_{s}U_{s}$ is a full rank decomposition. 
    $L_{s}$ has full column rank, and $U_{s}$ has full row rank. 
    This implies $A_{s}A_{s}^{\dagger}=L_{s}L_{s}^{\dagger}$ and $A_{s}^{\dagger}A_{s} = U_{s}^{\dagger}U_{s}$.
    So $O_{m \times n} = (I_{m}-A_{s}A_{s}^{\dagger})A_{i}(I_{n}-A_{s}^{\dagger}A_{s}) = (I_{m}-L_{s}L_{s}^{\dagger})A_{i}(I_{n}-U_{s}^{\dagger}U_{s})$.

    Consequently, we conclude that conditions (1), (2), (3), and (4) are equivalent.
    
\end{proof}
{\color{red}
Cui and Qi \cite{cui2025genuine} proposed a new dual Moore-Penrose inverse (NDMPI) and further explored its relationship and differences with the Moore-Penrose dual generalized inverse (MPDGI) and DMPGI.
}
\subsection{Dual Cholesky Decomposition}\label{sec: ESDCD}
In the solution of large-scale systems of symmetric positive definite linear equations, Cholesky decomposition holds significant importance. 
Subsequently, we present an explicit expression for the dual matrix, preceded by the introduction of a lemma.
\begin{lemma}\label{lem: A'X+X'A=B}
    \cite{djordjevic2007explicit}  Given $A\in \mathbb{R}^{n\times m} $ is invertible, and $B \in \mathbb{R}^{m \times m}$. The necessary and sufficient condition for the existence of a solution to the equation
    \begin{equation}\label{equ: A'X+X'A=B}
        A^{\top}X +X^{\top}A = B.
    \end{equation}
    is $B^{\top}=B$, and there exists a matrix $Z\in \mathbb{R}^{n \times n}$ satisfying $Z^{\top} = -Z$, the general solution expression to \eqref{equ: A'X+X'A=B} is given by
    \begin{equation}
        X = \frac{1}{2}(A^{\top})^{-1}B+ZA.
    \end{equation}
\end{lemma}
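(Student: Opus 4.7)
The plan is to split the argument into three parts: necessity of the symmetry condition on $B$, sufficiency by direct substitution of the proposed formula, and a generality argument showing every solution can be written in the stated form. Throughout, I will exploit the fact that $A$ is square and invertible, since that is essential for both applying $(A^{\top})^{-1}$ and for the final surjectivity step.

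For necessity, I would simply transpose the equation. Since $(A^{\top}X + X^{\top}A)^{\top} = A^{\top}X + X^{\top}A$ holds identically, any $B$ admitting a solution must satisfy $B^{\top}=B$. This part is one line and poses no difficulty.

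For sufficiency, I would substitute $X = \tfrac{1}{2}(A^{\top})^{-1}B + ZA$ with $Z^{\top} = -Z$ into the left-hand side. A direct calculation gives $A^{\top}X = \tfrac{1}{2}B + A^{\top}ZA$ and $X^{\top}A = \tfrac{1}{2}B^{\top}A^{-1}A + A^{\top}Z^{\top}A = \tfrac{1}{2}B + A^{\top}Z^{\top}A$, where I used $B^{\top}=B$. Adding the two and invoking $Z + Z^{\top} = 0$ yields $A^{\top}X + X^{\top}A = B + A^{\top}(Z+Z^{\top})A = B$, confirming that the formula does produce a solution for every skew-symmetric $Z$.

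The most delicate step, and the one I expect to be the main obstacle, is showing that every solution has this form, i.e.\ that the parametrization by skew-symmetric $Z$ exhausts the full solution set. The natural approach is to study the homogeneous equation: if $X_{1}, X_{2}$ are two solutions and $Y = X_{1}-X_{2}$, then $A^{\top}Y + Y^{\top}A = 0$, which is equivalent to saying that $A^{\top}Y$ is skew-symmetric. Setting $S := A^{\top}Y$ with $S^{\top}=-S$ and using invertibility of $A$, one can solve $Y = (A^{\top})^{-1}S = (A^{\top})^{-1}S A^{-1}\cdot A$. Defining $Z := (A^{\top})^{-1}S A^{-1}$, a short computation shows $Z^{\top} = (A^{-1})^{\top}S^{\top}((A^{\top})^{-1})^{\top} = -(A^{\top})^{-1}S A^{-1} = -Z$, so $Z$ is skew-symmetric and $Y = ZA$. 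Taking $X_{1} = \tfrac{1}{2}(A^{\top})^{-1}B$ (the particular solution from the sufficiency step) then gives that an arbitrary solution $X_{2}$ has the form $\tfrac{1}{2}(A^{\top})^{-1}B + ZA$ with $Z$ skew-symmetric, completing the characterization of the general solution.
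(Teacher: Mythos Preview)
Your argument is correct in all three parts: the one-line necessity, the direct substitution for sufficiency, and the homogeneous-equation analysis for generality. The only cosmetic point is a sign: with $Y=X_{1}-X_{2}$ and $Y=ZA$ you get $X_{2}=\tfrac{1}{2}(A^{\top})^{-1}B-ZA$, but since $-Z$ is again skew-symmetric this is harmless.

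There is nothing to compare against in the paper itself: the lemma is quoted from \cite{djordjevic2007explicit} without proof and used as a black box in the dual Cholesky construction. Your self-contained elementary argument is therefore a genuine addition rather than a rederivation of something the authors wrote out; it relies only on invertibility of $A$ (forcing $n=m$ in the stated dimensions) and basic transpose identities, which is exactly the minimal toolkit needed.
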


Consider a symmetric positive definite dual matrix $A=A_{s}+\epsilon A_{i}\in \mathbb{DR}^{n \times n}$. 
Assuming $A$ has a dual Cholesky decomposition $A=LL^{\top}$, where $L = L_{s}+\epsilon L_{i} \in \mathbb{DR}^{n\times n}$ exists. 
This leads to two matrix equations,
\begin{equation}\label{equ: Ai = LsLi+LiLs}
    \begin{cases}
        A_{s} & = L_{s}L_{s}^{\top}, \\
        A_{i} & = L_{s}L_{i}^{\top} + L_{i}L_{s}^{\top}.
    \end{cases}
\end{equation}
The first equation is the standard Cholesky decomposition, while the second equation can be derived using Lemma \ref{lem: A'X+X'A=B}.
The idea of solving these two problems together allows us to derive the explicit solution of dual Cholesky decomposition.

\begin{theorem}\label{the: dual cholesky decomposition}
    (Dual Cholesky Decomposition) Consider a dual symmetric positive definite matrix $A = A_{s}+A_{i} \epsilon \in \mathbb{DR}^{n \times n}$.
    Suppose $A_{s}=L_{s}L_{s}^{\top}$ represents the Cholesky decomposition of $A_{s}$, and $A_{s}$ is assumed to be invertible.
    Then the Cholesky decomposition of the dual matrix $A = LL^{\top}$, where $L = L_{s}+\epsilon L_{i} \in \mathbb{DR}^{n \times n}$ is a lower triangular dual matrix exist. Additionally, the expressions of $L_{i}$, the infinitesimal part of $L$ is 
    \begin{equation}\label{equ: Li expression in dual cholesky decomposition}
        L_{i}=\frac{1}{2}A_{i}(L_{s}^{\top})^{-1}-L_{s}P,
    \end{equation}
    where $P\in \mathbb{R}^{n \times n}$ is a skew-symmetric matrix. The expression of the upper triangular part of $P$ as
    \begin{equation}
    P(1:t-1,t) = L_{s}^{-1}(1:t-1,1:t-1)B(1:t-1,t),\quad t=2,3,\ldots,n,
    \end{equation}
    where $B =\frac{1}{2}A_{i}(L_{s}^{\top})^{-1} \in \mathbb{DR}^{n\times n}$.
\end{theorem}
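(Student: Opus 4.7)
The plan is to mimic the DLU proof of \Cref{the: DLU decomposition}: expand the defining dual matrix equation $A=LL^{\top}$, reduce the infinitesimal part to the matrix equation covered by \Cref{lem: A'X+X'A=B}, write down its general solution parametrized by a skew-symmetric matrix, and then use the lower-triangular structure of $L_i$ to pin that free parameter down.

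First I would substitute $L=L_s+\epsilon L_i$ into $A=LL^{\top}$ and collect powers of $\epsilon$, obtaining $A_s=L_sL_s^{\top}$ (the standard Cholesky, already given) and the equation $A_i=L_sL_i^{\top}+L_iL_s^{\top}$ for the infinitesimal part. This is precisely the form $A^{\top}X+X^{\top}A=B$ of \Cref{lem: A'X+X'A=B}, with $A\mapsto L_s^{\top}$, $X\mapsto L_i^{\top}$, and $B\mapsto A_i$. Two facts make the lemma applicable: $L_s^{\top}$ is invertible because $A_s$ is invertible with Cholesky factor $L_s$ having positive diagonal; and $A_i$ is symmetric by \Cref{def: SPD}, matching the compatibility condition $B^{\top}=B$. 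The lemma then yields $L_i^{\top}=\tfrac{1}{2}L_s^{-1}A_i+ZL_s^{\top}$ for an arbitrary skew-symmetric $Z\in\mathbb{R}^{n\times n}$. Transposing and using $A_i^{\top}=A_i$ and $Z^{\top}=-Z$, this becomes $L_i=\tfrac{1}{2}A_i(L_s^{\top})^{-1}-L_sZ$, which is precisely \eqref{equ: Li expression in dual cholesky decomposition} on setting $P=Z$.

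Next I would determine $P$ by imposing that $L_i$ be lower triangular. Write $B=\tfrac{1}{2}A_i(L_s^{\top})^{-1}$, so that $L_i=B-L_sP$. Since $L_s$ is lower triangular, for any $i<t$ the entry $L_i(i,t)=B(i,t)-\sum_{k=1}^{i}L_s(i,k)P(k,t)$ must vanish. Collecting these constraints column by column for $t=2,\ldots,n$ gives the lower-triangular system
\begin{equation*}
    L_s(1:t-1,1:t-1)\,P(1:t-1,t)=B(1:t-1,t),
\end{equation*}
whose coefficient matrix is a leading principal submatrix of $L_s$ and hence invertible (positive diagonal). Inverting yields the stated formula for the strictly upper-triangular part of $P$, and skew-symmetry then determines $P$ entirely (its diagonal is automatically zero). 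A quick count confirms the system is square: the $\binom{n}{2}$ constraints match the $\binom{n}{2}$ free parameters in a skew-symmetric $P$.

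The main obstacle I anticipate is bookkeeping rather than conceptual: one must carefully verify that the free parameter in the general solution of \Cref{lem: A'X+X'A=B} has exactly enough degrees of freedom to enforce the lower-triangular structure of $L_i$, and that transposing does not corrupt the symmetry/skew-symmetry bookkeeping when passing from $L_i^{\top}$ to $L_i$. Once this is clear, the uniqueness of $L_i$ is immediate, and the existence claim $A=LL^{\top}$ follows by construction.
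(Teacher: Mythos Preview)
Your proposal is correct and follows essentially the same route as the paper: expand $A=LL^{\top}$ into standard and infinitesimal parts, apply \Cref{lem: A'X+X'A=B} to $A_i=L_sL_i^{\top}+L_iL_s^{\top}$ to obtain $L_i=\tfrac{1}{2}A_i(L_s^{\top})^{-1}-L_sP$ with $P$ skew-symmetric, and then fix the strictly upper-triangular columns of $P$ via the lower-triangular constraint on $L_i$. Your explicit check of the compatibility condition $A_i^{\top}=A_i$ via \Cref{def: SPD} and the degree-of-freedom count are welcome additions that the paper leaves implicit.
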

\begin{proof}
    Assume that the Cholesky decomposition of the dual symmetric positive definite matrix $A$ exists, the standard and infinitesimal parts of $A$ can be expressed as
    \begin{equation}\label{equ: Ai = LiLs+LsLi}
        \begin{cases}
            A_{s} & = L_{s}L_{s}^{\top}, \\
            A_{i} & = L_{s}L_{i}^{\top} + L_{i}L_{s}^{\top}.
        \end{cases}
    \end{equation}
    The first equation in \eqref{equ: Ai = LiLs+LsLi} can be obtained from the Cholesky decomposition of the standard part $A_{s}$.
    Treating $L_{i}$ as unknown and $A_{s}$ is invertible, the second equation in \eqref{equ: Ai = LiLs+LsLi} can be seen as solving the equation as in Lemma \ref{lem: A'X+X'A=B}.
    $L_{i}$ has a solution 
    \begin{equation}\label{equ: Li by Lemma}
         L_{i}=\frac{1}{2}A_{i}(L_{s}^{\top})^{-1}-L_{s}P,
    \end{equation}
    where $P\in \mathbb{R}^{n \times n}$ is an arbitrary skew-symmetric matrix. 
    Just as in Theorem \ref{the: DLU decomposition}, we now determine the elements in $P$ through the lower triangular property of $L_{i}$. 
    According to \eqref{equ: Li by Lemma} and set $B =\frac{1}{2}A_{i}(L_{s}^{\top})^{-1}$, we obtain the matrix equation,
    \begin{equation}\label{equ: Li = B +PLs}
        O = B(1:t-1,t)-L_{s}(1:t-1,1:t-1)P(1:t-1,t),\quad t =2,3,\cdots,n.
    \end{equation}
    Solving this set of equations, we can obtain the expression of the upper triangular part of $P$ by
    \begin{equation}
        P(1:t-1,t) = L_{s}^{-1}(1:t-1,1:t-1)B(1:t-1,t),\quad t=2,3,\ldots,n.
    \end{equation}
    The remaining elements in $P$ can be obtained using the skew-symmetry of $P$.     
\end{proof}

\begin{remark}\label{rem: semidefine}
    For the dual Cholesky decomposition in Theorem \ref{the: dual cholesky decomposition}, we consider the case where the dual matrix $A=A_{s}+\epsilon A_{i} \in \mathbb{R}^{n\times n}$ is symmetric and positive semi-definite. 
    The questions of existence and uniqueness of a Cholesky decomposition when $A_{s} = L_{s}L_{s}^{\top}$ is positive semi-definite are answered by the following result \cite{higham1990analysis}.
    The construction process for the infinitesimal part $L_{i}$ is analogous to the proof in Theorem \ref{the: dual cholesky decomposition}. 
    Therefore, the dual Cholesky decomposition of symmetric positive semi-definite dual matrices also exists.
\end{remark}

Based on Theorem \ref{the: dual cholesky decomposition}, we can obtain pseudocode for the dual Cholesky decomposition (DCholesky) in Algorithm \ref{alg: Cholesky Decomposition of Dual Matrix}.

\begin{algorithm}[htb]
\DontPrintSemicolon
    \KwInput{daul symmetric positve definite matrix $A = A_{s}+A_{i}\epsilon \in \mathbb{DR}^{n \times n}$ and $A_{s}$ is invertible.}
    
     Compute the Cholesky decomposition of $A_{s} = L_{s}L_{s}^{\top}$ .

    $B =\frac{1}{2}A_{i}(L_{s}^{\top})^{-1}$. 

    $P = zeros(n,n)$.

    
    \For{$t = n:-1:2$}{

        $P(1:t-1,t)  = L_{s}^{-1}(1:t-1,1:t-1)B(1:t-1,t)$.

        $P(t,1:t-1)=-P(1:t-1,t)^{\top}$.
    }

    $L_{i}=B-L_{s}P $.

    \KwOutput{$L = L_{s}+L_{i}\epsilon \in \mathbb{DR}^{n \times n}$ is a lower trangular dual matrix.}
    \caption{Dual Cholesky decomposition (DCholesky)}
    \label{alg: Cholesky Decomposition of Dual Matrix}
\end{algorithm}

\section{Experiments}\label{sec: experiment}

In this section, we use a random dual real symmetric matrix as an example to calculate ATDSVD and DLU decomposition. 
We use the Hankel structured noise of the real symmetric Hankel matrix to obtain a detailed error decomposition.
All experiments were conducted on an Apple M2 Pro processor with 16GB of memory, using MATLAB version 2023b with a machine precision of $2.22e-16$.

\begin{example}\label{exam: random A}
    Construct a random dual real symmetric matrix $A$, where
\begin{equation}
    A_{s} = \begin{bmatrix}
         0.4910  &  0.4263  &  0.3317 &   0.8574\\
    0.4263  &   1.5287    & 1.1186  &   1.7450\\
    0.3317   &  1.1186   &  1.0930  &   1.3879\\
    0.8574   &  1.7450 &    1.3879  &   2.4994
    \end{bmatrix}, A_{i} = \begin{bmatrix}
        1.1980  &  0.9304  &  1.0057  &  0.9948 \\
    0.9304  &  0.8665  &  1.1222  &  0.8256\\
    1.0057  &  1.1222  &  2.0469  &  1.1378\\
    0.9948  &  0.8256  &  1.1378  &  1.0240
    \end{bmatrix}.
\end{equation}
Using ATDSVD from Algorithm \ref{alg: ATDCD}, we can obtain the SVD of $A = VSV^{\top}$, where
\begin{equation}
        V_{s} = \begin{bmatrix}
         -0.2182  &  0.7044  & -0.1299 &  -0.6628 \\
   -0.5280 &  -0.4138   & 0.6310  & -0.3896 \\
   -0.4268  & -0.4469  & -0.7642  & -0.1847 \\
   -0.7010  &  0.3645  &  0.0304  &  0.6122
    \end{bmatrix}, 
         V_{i} = \begin{bmatrix}
       -0.2237  &  0.0523   &-1.8169  &  0.4854\\
    0.0509  &  0.5157 &  -0.3976 &  -1.2605\\
   -0.1896  & -0.5158 &   0.0263 &   1.5775\\
    0.1468  & -0.1482 &   1.1493  &  0.1992\\
    \end{bmatrix},
\end{equation}
and 
\begin{equation}
\begin{cases}
    S_{s} = \diag\{ 4.9258 ,0.4738,0.1705,0.0421\},\\
    S_{i} = \diag\{3.6787   , 0.4183 , 0.4973 ,0.5411\}.
\end{cases}   
\end{equation}
Using Algorithm \ref{alg: Cholesky Decomposition of Dual Matrix}, we can obtain the DCholesky decomposition of $A$, where
\begin{equation}
    L_{s} =\begin{bmatrix}
         0.7007   &      0      &   0    &     0\\
    0.6084  &  1.0763   &      0  &       0 \\
    0.4733   & 0.7717  &  0.5229   &      0 \\
    1.2235   & 0.9296 &   0.1748   & 0.3281
    \end{bmatrix},
    L_{i} = \begin{bmatrix}
        0.8548 &  0       & 0  &       0 \\
    0.5856  &  0.0715    &     0 &        0\\
   0.8578  & 0.2489   & 0.8133  &       0\\
   -0.0731  & 0.0808  &  -0.5987   & 1.9236
    \end{bmatrix}.
\end{equation}
\end{example}

\begin{example}\label{exam: Hankel}
    Hankel matrices are a special class of symmetric matrices and a special type of Toeplitz matrices.
    Hankel matrices have important applications in various fields, such as signal processing \cite{zhao2009similarity} and control theory of continuous or discrete systems \cite{mishra2020data}.
    Consider a real Hankel matrix $H_{s}\in \mathbb{R}^{5\times 5}$.
    The noise of $H_{s}$ is also a real Hankel matrix, denoted by $H_{i}\mathbb{R}^{5\times 5}$, where
    \begin{equation}
        H_{s} = \begin{bmatrix}
        1 &  0  &  1  &  0  & 1\\
        0 &  1  &  0  &  1  & 0\\
        1 &  0  &  1  &  0  & 1\\
        0 &  1  &  0  &  1  & 0\\
        1 &  0  &  1  &  0  & 1\\
    \end{bmatrix}, H_{i}= \begin{bmatrix}
        1 &  0  &  1  &  0  & 1\\
        0 &  1  &  0  &  1  & 0\\
        1 &  0  &  1  &  0  & 1\\
        0 &  1  &  0  &  1  & 0\\
        1 &  0  &  1  &  0  & 1\\
    \end{bmatrix}.
    \end{equation}
    The dual Hankel matrix $H =H_{s}+\epsilon H_{i}$. 
    By calculating ATDSVD from Algorithm \ref{alg: ATDCD}, we obtain $H=U\Sigma U^{\top}$, where
    \begin{equation}
    \begin{cases}
        U_{s} = \begin{bmatrix}
             -0.5774  &  0&    0&    0.8165 &  0 \\
         0   & 0.7071 &   0.3801&         0  & -0.5963 \\
   -0.5774   &0 & -0.5963  & -0.4082  & -0.3801 \\
         0   & 0.7071 &-0.3801    &     0  &  0.5963 \\
   -0.5774   &0 &0.5963   &-0.4082  &  0.3801
        \end{bmatrix}, \\
        U_{i} = 10^{-15}*\begin{bmatrix}
            0.1209  & 0  & -0.1470  & -0.0855  &  0 \\
    0.0281&   -0.1506   & 0.4065&    0.0883   & 0.0805\\
    0.0163 &   0.0902  &  0.2966 &  -0.2240  & -0.2494\\
   -0.0281  &  0.1506 &  -0.0924  & -0.0883 &  -0.2375\\
    0.1046   &-0.0902&   -0.0214   & 0.0531&    0.2494\\
        \end{bmatrix},
    \end{cases}         
    \end{equation}
    and
    \begin{equation}
        \begin{cases}
            \Sigma_{s} =\diag\{3,2,0,0,0\}, \\
            \Sigma_{i} =\diag\{3,2,0,0,0\}. 
        \end{cases}
    \end{equation}
    ATDSVD provides a detailed decomposition of the Hankel signal noise, which is of great help for the analysis of Hankel matrices. 
    It should be noted that $H_{s}$ is not a symmetric positive definite matrix, so $H$ does not have a DLU decomposition.
\end{example}

\section{Conclusion}
This article utilizes matrix transformations to present the Autoone-Takagi decomposition of dual complex symmetric matrices.
It extends to the generalized Autoone-Takagi decomposition of dual quaternion $\eta$-Hermitian matrices. 
Simultaneously, the LU decomposition of dual matrices can be obtained using the general solution of the Sylvester equation.
The existence of LU decomposition is equivalent to the existence of rank-$k$ decomposition and DMPGI.
The Cholesky decomposition of dual matrices is provided in the case of real symmetric matrices. 
These theoretical explorations, driven by numerical linear algebra, lay foundational contributions to the theory of dual matrices.

\section{Data availability}
No data was used for the research described in the article.
\section{Acknowledgements}
Renjie Xu is supported by the Hong Kong Innovation and Technology Commission (InnoHK Project CIMDA).
Yimin Wei is supported by the National Natural Science Foundation of China under grant 12271108, and the Ministry of Science and Technology of China under grant G2023132005L.
Hong Yan is supported by the Hong Kong Research Grants Council (Project 11204821), the Hong Kong Innovation and Technology Commission (InnoHK Project CIMDA), and the City University of Hong Kong (Projects 9610034 and 9610460).

\footnotesize
\bibliographystyle{abbrv}
\bibliography{reference}

\end{document}